\providecommand{\U}[1]{\protect\rule{.1in}{.1in}}
\newtheorem{theorem}{Theorem}[section]
\newtheorem{corollary}[theorem]{Corollary}
\newtheorem{definition}[theorem]{Definition}
\newtheorem{lemma}[theorem]{Lemma}
\newtheorem{question}[theorem]{Question}
\newtheorem{proposition}[theorem]{Proposition}
\def\R{{\mathbb R}}
\def\N{\mathbb{N}}
\def\Ntwo{\mathbb{N}_{\geq 2}}
\def\Z{{\mathbb Z}}
\begin{document}
\title{Coincidence rank and multivariate equicontinuity}
\author{Felipe García-Ramos}
\author{Irma León-Torres}

\address{F. García-Ramos, Physics Institute, Universidad Autónoma de San Luis Potos\'i, M\'exico\\
Faculty of Mathematics and Computer Science, Uniwersytet Jagielloński, Poland.}
\email{fgramos@conahcyt.mx *corresponding author}  
\address{I. León-Torres, Physics Institute, Universidad Aut\'onoma de San Luis Potos\'i, M\'exico.}
\email{yooirma@gmail.com}
\begin{abstract}
  The coincidence rank, introduced by Barge and Kwapisz, measures the regularity of the maximal equicontinuous factor of minimal dynamical systems. We provide a characterization of the finiteness of coincidence rank using a multivariate notion of equicontinuity. 
\end{abstract}
\maketitle


\section{Introduction}
In topological dynamics, minimal equicontinuous systems represent the most structured and ordered types of systems. These systems can be modeled as rotations on compact abelian groups. However, most dynamical systems, even topological models of quasicrystals, are not of this form. A key question in such cases is how closely these systems approximate equicontinuous behavior. One common approach is to study the \textit{maximal equicontinuous factor}, which is the largest dynamical projection of the system that retains equicontinuity. This factor provides insight into the regularity and structure of the original system. For chaotic systems, like mixing systems, the maximal equicontinuous factor reduces to a single point. In contrast, systems with more regular behavior exhibit a richer connection to their maximal equicontinuous factor, where properties such as having finite fibers become relevant.

 In the study of aperiodic order, there are different measurements of how close a system resembles its maximal equicontinuous factor. The \textit{maximal rank} is the supremum of the cardinality of the fibers of the maximal equicontinuous factor map. Another notion, introduced by Barge and Kwapisz\cite{barge2006geometric}, is the \textit{coincidence rank}, that is, the maximal number of distinct points in a fiber, of the maximal equicontinuous factor map, that are not similarly behaved (not proximal). This concept was introduced to study substitution dynamical systems, but the concept has been explored for more general minimal systems and actions in \cite{barge2013proximality,barge2012homological,kellendonk2022ellis,baker2006geometric}. In particular, consequences of having finite coincidence rank were studied in \cite[Subsection 4.2.3]{Aujogue2015}. 

In addition to studying equicontinuity via the maximal equicontinuous factor, other approaches focus on regularity within the system itself. A typical approach would be to define conjugate-invariant properties that resemble equicontinuity in some way. Such notions involve tuples of points that remain close on specific subsets of the acting group.  
 A natural question arises: can one identify weaker forms of equicontinuity that appear within the system using the regularity of the maximal equicontinuous factor and vice-versa. Several studies have explored these connections using concepts such as syndetic equicontinuity \cite{huang2018analogues} and mean equicontinuity and its variants \cite{downarowiczglasner,garcia2017weak,garcia2021mean,li2015mean}. 

In this paper, we extend this line of investigation by introducing two multivariate notions of equicontinuity: $m$-equicontinuity and $m$-convering equicontinuity and we use them to characterize finite rank. 
\begin{theorem}
\label{teorema1.1}
    Let $G$ be a $\sigma$-compact abelian group, $(X,G)$ a minimal continuous action and $m\in \N$. We have that 
    \begin{itemize}
        \item  $(X,G)$ is $m$-equicontinuous if and only if the maximal rank is bounded by $m$.
        \item  $(X,G)$ is cover $m$-equicontinuous if and only if the coincidence rank is bounded by $m$.
    \end{itemize}
   
\end{theorem}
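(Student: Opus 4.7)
The plan is to route both equivalences through the maximal equicontinuous factor map $\pi\colon X\to X_{eq}$. For a minimal action of an abelian group, the fibers of $\pi$ coincide with the regionally proximal equivalence classes, so the maximal rank counts points per fiber and the coincidence rank counts proximality-classes per fiber. Multivariate equicontinuity is the uniform assertion that, among $m+1$ points that look close in $X$ (respectively, are caught together in a small cover element), at least two must have orbits that stay uniformly close for all $g\in G$ (respectively, must be proximal). So both equivalences are really compactness/continuity bridges between a pointwise fiber count and a uniform tuple statement.

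For the direction \emph{rank $\leq m$ implies $m$-equicontinuity} (and similarly for the cover versions), I would argue by contradiction. Fix $\epsilon>0$ and suppose no $\delta$ witnesses the $m$-equicontinuity property. Choose $\delta_n\to 0$ and tuples $(x_0^{(n)},\ldots,x_m^{(n)})$ that satisfy the $\delta_n$-closeness hypothesis but where, for every pair $i<j$, some $g_{ij}^{(n)}\in G$ pushes $x_i^{(n)}$ and $x_j^{(n)}$ more than $\epsilon$ apart. By compactness of $X$ and a diagonal extraction, pass to a limit tuple $(y_0,\ldots,y_m)$ lying in a single $\pi$-fiber. A further net extraction of the $g_{ij}^{(n)}$ exhibits each pair $(y_i,y_j)$ as regionally proximal \emph{and} not equal (for the rank statement) or not proximal (for the coincidence-rank statement). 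This produces $m+1$ pairwise distinct, respectively pairwise non-proximal, points in one fiber, contradicting the bound.

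For the converse, assume $(X,G)$ is $m$-equicontinuous (or cover $m$-equicontinuous) and suppose for contradiction there exist $m+1$ witnesses $x_0,\ldots,x_m\in\pi^{-1}(z)$ that are pairwise distinct (respectively pairwise non-proximal). Use minimality together with $\sigma$-compactness of $G$ to produce tuples $(y_0^{(n)},\ldots,y_m^{(n)})$ whose images in $X_{eq}$ cluster close enough to $z$ to satisfy the closeness hypothesis of the multivariate equicontinuity, and whose components converge back to $x_0,\ldots,x_m$. Apply $m$-equicontinuity with $\epsilon$ smaller than half the minimum pairwise separation of the $x_i$ (respectively, smaller than the proximal-exclusion constant coming from the pairwise non-proximality). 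This forces two of the $y_i^{(n)}$ to have uniformly $\epsilon$-close orbits; passing to the limit yields $x_i=x_j$ or $x_i,x_j$ proximal, contradicting the choice of the tuple.

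The principal technical obstacle is the last translation step: converting a pointwise fiber statement (pairwise distinct or pairwise non-proximal points in $\pi^{-1}(z)$) into an approximating finite tuple whose image in $X_{eq}$ satisfies the uniform closeness hypothesis needed to trigger the multivariate equicontinuity. This is where the minimality and the abelian, $\sigma$-compact structure of $G$ must be used honestly, typically via a neighborhood-lifting through $\pi$ combined with an orbit-density argument that lets us shift the tuple $(x_0,\ldots,x_m)$ so its image in $X_{eq}$ lies in an arbitrarily prescribed small neighborhood. The cover version adds the further bookkeeping that the conclusion is framed in terms of the proximal relation rather than strict equality, so throughout one must carry a fixed compatible uniformity on $X$ detecting proximality and ensure that $\epsilon$-separations of non-proximal pairs remain uniform along the approximating sequence.
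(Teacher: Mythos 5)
Your overall strategy---identifying fibers of $\pi_{eq}$ with regional-proximality classes and running compactness arguments in both directions---is the same as the paper's, but the sketch has two concrete gaps in the first bullet. First, the negation of $m$-equicontinuity supplies a \emph{single} $g^{(n)}$ that separates all pairs of the tuple simultaneously, not a pair-dependent family $g_{ij}^{(n)}$; with pair-dependent elements you cannot manufacture $m$ pairwise-distinct points in one fiber. Worse, the limit tuple you describe is the limit of $(x_0^{(n)},\dots,x_m^{(n)})$ themselves, but these all lie in $B_{\delta_n}(x)$ with $\delta_n\to 0$, so that limit is the constant tuple $(x,\dots,x)$ and yields nothing. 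One must instead set $y_i=\lim_n g^{(n)}x_i^{(n)}$: these are pairwise $\ge\varepsilon$ apart, hence distinct, and the tuple is $m$-regionally proximal because applying $(g^{(n)})^{-1}$ collapses approximants of the $y_i$ back into $B_{\delta_n}(x)$. Second, the converse direction hinges on Auslander's theorem that for abelian $G$ pairwise membership in $Q_2$ promotes the whole tuple to $Q_m$; this is what produces one group element bringing all $m$ fiber points close \emph{simultaneously}, and it does not follow from the ``neighborhood-lifting plus orbit-density'' step you describe (which only controls images in $X_{eq}$, not distances in $X$). A further minimality/Lebesgue-number argument is then needed to transport the failure of equicontinuity to every point of $X$.

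The coincidence-rank half is not just bookkeeping on top of this. To get $r_c\ge m$ from the failure of cover $m$-equicontinuity you must produce points whose orbits stay $\ge\delta$ apart for \emph{all} $g\in G$ (non-proximality); a single separating group element per $n$, or a pair-dependent family, gives nothing of the sort. The paper's proof exhausts $G$ by compacts $F_n$ (this is where $\sigma$-compactness is genuinely used), finds $h_{n,k}$ with $F_nh_{n,k}$ inside the separation set, and takes a double limit---first in $n$ to obtain full-orbit separation, then in $k$ to land in a single fiber. Conversely, to contradict cover $m$-equicontinuity from $r_c\ge m$ one must arrange separation on an entire translate $Kh$ of an arbitrary compact $K$; the paper does this by first producing a tuple with $\inf_{g\in G}d(gx_i,gx_j)>\delta$ whose first coordinate lies in a prescribed open set, shrinking a product neighborhood so that $\min_{g\in K}d(gU_i,gU_j)>\delta$, and invoking a minimality lemma ($\bigcap_i N(U_1,U_i)\neq\emptyset$ for $Q_m$ tuples) to place all $m$ points inside $U_1$. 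None of this machinery appears in your sketch, so as written the second bullet does not go through.
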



In recent papers, Breitenbücher, Haupt, and Jäger and Haupt introduced different multivariate forms of equicontinuity, which they applied to the study of finite-to-one topomorphic extensions \cite{breitenbucher2024multivariate,haupt2025multivariate}.  While their results are not directly related to ours, they contribute to the same general line of research. 

The study of non-trivial finite ranks and multivariate notions is not only of abstract interest but is also supported by several natural examples that exhibit these properties. Classical examples include the Thue-Morse substitution (and its generalizations) and some Toeplitz subshifts. Additionally, the existence of examples with finite coincidence rank but infinite minimal rank can be stated using skew-product techniques developed by Glasner and Weiss \cite{glasner1979construction}. Explicit smooth constructions of this type were presented by Haupt and Jäger \cite{HAUPT_JAGER_2024}. We will provide detailed references for these examples in Section \ref{sec:examples}.

Our results are related to the study of multivariate regional proximality and sensitivity \cite{auslander2004group,Shao2008,Huang2011,zou2017stronger,liu2019auslander,Lituple2022}. In occasions the results are somewhat natural adaptations. We will mention such connections throughout the paper. \\

\textbf{Ackowledgments:} The authors would like to thank Tobias Jägger and Sebastián Donoso for conversations and comments that improved the paper. We would also like to thank Song Shao for valuable references. I. León-Torres would like to thank Alicia Santiago Santos for introducing her to the topic of sensitivity. 

I. León-Torres was supported by a SECIHTI (CONAHCyT) fellowship for her PhD studies, along with the complementary support for Indigenous women. She also extends her gratitude to the Jagiellonian University and the University of Chile for hosting research visits.

F.~García-Ramos was supported by a SECIHTI (CONAHCyT) Basic Science grant, the grant U1U/W16/NO/01.03 of the Strategic Excellence Initiative program of
the Jagiellonian University, and the grant K/NCN/000198 of the NCN.

\section{Preliminaries}

The collection of positive integers will be denoted by $\N$ and the collection of integers larger or equal than $2$ with $\N_{\geq 2}$.

Throughout this paper $G$ represents a locally compact topological group with identity $e$, and $X$ a compact metrizable space with compatible metric $d$. 

Given $x\in X$ and $\varepsilon>0$ we define $B_{\varepsilon}(x)=\{y\in X: d(x,y)<\varepsilon\}$.

A \textbf{group action} of $G$ on $X$ is a map $\alpha:G\times X\to X$, such that \[\alpha(g,\alpha(h,x))=\alpha(gh,x)\] and $\alpha(e,x)=x$ for all $x\in X$ and $g,h\in G$. We say that the triplet $(X,G,\alpha)$ is a \textbf{continuous action} if $\alpha:G\times X\to X$ is a group action that is continuous. For convenience, we will simply use the pair $(X,G)$ to represent a continuous action, and we will denote the image $\alpha(g,x)$ as $gx$. If $Y\subset X$ then $gY=\{gy:y\in Y\}$.\\

We say $(X,T)$ is a \textbf{topological dynamical system} and $T\colon X \to X$ is a homeomorphism. There is a correspondence between continuous actions of $(X,\Z)$ and topological dynamical systems $(X,T)$. \\

Let $(X,G)$ be a continuous action and $x\in X$. The \textbf{orbit of $x$} is the set $Gx=\{gx:g\in G\}$. We say that $(X,G)$ is \textbf{minimal} if $Gx$ is dense in $X$ for all $x\in X$. Actually, $(X,G)$ is minimal if and only if for every non-empty $G$-invariant ($gY=Y \; \forall g\in G$) closed subset $Y\subset X$ we have that $Y=X$.  \\

We say that a pair $(x_1,x_2)\in X\times X$ is \textbf{proximal} if for any $\varepsilon>0$,  there exists $g\in G$ such that $d(gx_1,gx_2)<\varepsilon$. We denote the set of all proximal pairs of $(X,G)$ by $P(X,G)$. 

Let $m\in \Ntwo$. A tuple $(x_1,\ldots,x_m)\in X^m$ is called \textbf{$m$-regionally proximal} if for each $\varepsilon>0$, there exist $x'_1,\ldots,x'_m\in X$ such that $d(x_i,x'_i)<\varepsilon$ for all $i\in\{1,\ldots,m\}$, and there exists $g\in G$ such that $d(gx'_i,gx'_j)<\varepsilon$ for all $i,j\in \{1,\ldots,m\}$. We denote by $Q_m(X,G)$ the collection of all $m$-regionally proximal tuples of $(X,G)$. The pairs in $Q_2(X,G)$ correspond to the classical regional proximal pairs.\\ 

The following result was proved in \cite[Theorem 8]{auslander2004group}.
\begin{theorem}[Auslander]
\label{nregionally}
    Let $G$ be an abelian group, $(X,G)$ a minimal continuous action and $m\in \Ntwo$. If  
    $(x_i,x_{i+1})\in Q_2(X,G)$ for each $i\in\{1,\ldots,m-1\}$ then $(x_1,\ldots,x_m)\in Q_m(X,G)$.
\end{theorem}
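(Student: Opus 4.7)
My plan is to argue by induction on $m$, with the base case $m=2$ being immediate. For the inductive step, the challenge is to combine the inductive conclusion $(x_1,\ldots,x_{m-1})\in Q_{m-1}(X,G)$ with the hypothesis $(x_{m-1},x_m)\in Q_2(X,G)$ to produce a single group element (modulo small perturbations of all $m$ coordinates) that simultaneously contracts $x_1,\ldots,x_m$.

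The approach I would take is to transfer the problem to the Ellis enveloping semigroup $E=E(X,G)$, where combining such relations is natural. Two facts underpin the argument: first, in a minimal action, a pair $(y,z)$ lies in $Q_2(X,G)$ if and only if there exists a minimal idempotent $u$ in the minimal ideal $M\subseteq E$ with $uy=uz$; second, when $G$ is abelian, $M$ is a commutative subsemigroup of $E$. Using these, for each $i=1,\ldots,m-1$ I would pick minimal idempotents $u_i\in M$ with $u_ix_i=u_ix_{i+1}$ and form the product $q=u_1u_2\cdots u_{m-1}\in M$. By the commutativity of $M$, for each fixed $i$ we may write $q=r_iu_i$ with $r_i\in M$, and then
\[ qx_i=r_iu_ix_i=r_iu_ix_{i+1}=qx_{i+1}, \]
so $qx_1=qx_2=\cdots=qx_m$. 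Choosing any net $(g_\alpha)\subset G$ with $g_\alpha\to q$ in the product topology on $X^X$, we obtain $g_\alpha x_j\to qx_1$ for every $j$ and hence $d(g_\alpha x_i,g_\alpha x_j)\to 0$ simultaneously; taking trivial perturbations $x_i'=x_i$ in the definition of $Q_m(X,G)$ concludes the inductive step.

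The main obstacle I anticipate is the rigorous justification of the two Ellis-semigroup ingredients above. The idempotent characterization of $Q_2(X,G)$ requires care because the definition of regional proximality allows perturbations, whereas the semigroup statement is pointwise; and the commutativity of $M$ for a minimal abelian action, while classical, is a nontrivial structural result. A more elementary alternative would go through the maximal equicontinuous factor $\pi\colon X\to X_{\mathrm{eq}}$: since $Q_2(X,G)$ coincides with $\{(y,z):\pi(y)=\pi(z)\}$ in the minimal abelian setting, the hypothesis already reduces to $\pi(x_1)=\cdots=\pi(x_m)$, and from this one would attempt a direct induction combining the inductive step with the $Q_2$ data for the last pair via the uniform equicontinuity of $(X_{\mathrm{eq}},G)$—though the same combining issue reappears in this route.
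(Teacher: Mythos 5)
Your Ellis-semigroup route rests on two structural claims that are both false; note also that the paper itself gives no proof of this statement (it is quoted from \cite[Theorem 8]{auslander2004group}), so the issue is purely the correctness of your argument. First, the existence of a minimal idempotent $u$ with $uy=uz$ characterizes the \emph{proximal} relation $P(X,G)$, not $Q_2(X,G)$; if your claim were true it would force $Q_2(X,G)=P(X,G)$ for every minimal abelian action, i.e.\ coincidence rank $1$, contradicting the Thue--Morse example discussed in this very paper, which has $r_c=2$ and hence non-proximal pairs in $Q_2(X,G)$. Second, for an abelian acting group only the translations $x\mapsto gx$ are central in $E(X,G)$ (because each such map is continuous); a minimal left ideal $M$ is essentially never commutative, since multiplication in $E(X,G)$ is continuous in one variable only, and already for minimal $\Z$-subshifts the idempotents of $M$ fail to commute. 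Third, even granting both claims, your conclusion $qx_1=\cdots=qx_m$ followed by a net $g_\alpha\to q$ and ``trivial perturbations'' would show that $(x_1,\ldots,x_m)$ is simultaneously proximal with no perturbation at all, a conclusion strictly stronger than membership in $Q_m(X,G)$ and false under the stated hypotheses; this is a sign that the premises you are feeding in are too strong.

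Your fallback via the maximal equicontinuous factor starts correctly --- by Theorem \ref{thm:RP=EQ} the hypothesis is equivalent to $\pi_{eq}(x_1)=\cdots=\pi_{eq}(x_m)$ --- but, as you concede, the entire content of the theorem is exactly the combining step you leave open: for each $\varepsilon$ one must produce perturbations $x_i'$ of all $m$ points and a \emph{single} $g\in G$ contracting them simultaneously, and neither inductive hypothesis hands you that. The known arguments (Auslander's) work directly with the regionally proximal relation, using minimality and the commutativity of $G$ itself rather than of the enveloping semigroup, to transport a contraction of the first $m-1$ coordinates so that the last pair can be contracted without undoing the others. As written, your proposal does not contain a proof of the inductive step.
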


Let $(X,G)$ and $(Y,G)$ be two continuous actions. We say that $(Y,G)$ is a \textbf{factor} of $(X,G)$ through $\phi$, if $\phi:X\to Y$ is a surjective continuous map such that $\phi(gx)=g\phi(x)$, for all $x\in X$ and $g\in G$. In this case we also say $(X,G)$ is an \textbf{extension} of $(Y,G)$ through $\phi$, and that $\phi$ is a factor map. If $\phi$ is a homeomorphism,  we say that $(X,G)$ and $(Y,G)$ are \textbf{conjugate}, and $\phi$ is called a \textbf{conjugacy}.
A factor map $\phi:X\to Y$ is \textbf{proximal} if for all $(x,y)\in X\times X$ such that $\phi(x)=\phi(y)$ we have $(x,y)\in P(X,G)$. In this case we say $(X,G)$ is a \textbf{proximal extension} of $(Y,G)$.\\


We say that $(X,G)$ is equicontinuous if, for every $\varepsilon>0$, there exists $\delta >0$ such that for any $x,y\in X$, if $d(x,y)<\delta$ then $d(gx,gy)<\varepsilon$ for all $g\in G$. There exists an unique factor $(X_{eq},G)$ of $(X,G)$ such that $(X_{eq},G)$ is equicontinuous, and if $(Y,G)$ is an equicontinuous factor of $(X,G)$, then $(Y,G)$ is a factor of $(X_{eq},G)$. The factor $(X_{eq},G)$ is called the \textbf{maximal equicontinuous factor} and we denoted by $\pi_{eq}:X\to X_{eq}$ its factor map. \\

For a proof of the following result see \cite[Page 130]{auslander1988minimal}.
\begin{theorem}
[Ellis and Keynes]
\label{thm:RP=EQ}
    Let $G$ be an abelian group, $(X,G)$ a minimal continuous action, and $x,y\in X$. We have that $(x,y)\in Q_2(X,G)$ if and only if $\pi_{eq}(x)=\pi_{eq}(y)$.
\end{theorem}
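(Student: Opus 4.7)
The plan is to prove the set equality $Q_2(X,G)=R_{eq}$, where $R_{eq}:=\{(x,y)\in X\times X:\pi_{eq}(x)=\pi_{eq}(y)\}$; the theorem's biconditional is this equality read pointwise.

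For the easier inclusion $Q_2(X,G)\subseteq R_{eq}$, I will use that $(X_{eq},G)$, being a minimal equicontinuous action on a compact metrizable space, is conjugate to a rotation on a compact abelian group and hence carries a $G$-invariant compatible metric $d_{eq}$. Fix $(x,y)\in Q_2$ and $\varepsilon>0$; uniform continuity of $\pi_{eq}$ provides $\delta>0$ with $d(u,v)<\delta$ implying $d_{eq}(\pi_{eq}(u),\pi_{eq}(v))<\varepsilon$. The definition of $Q_2$ at tolerance $\delta$ yields $x',y'\in X$ and $g\in G$ with $d(x,x'),d(y,y'),d(gx',gy')<\delta$; pushing through $\pi_{eq}$, cancelling $g$ by $G$-invariance of $d_{eq}$, and applying the triangle inequality produces $d_{eq}(\pi_{eq}(x),\pi_{eq}(y))<3\varepsilon$. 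Letting $\varepsilon\to 0$ forces $\pi_{eq}(x)=\pi_{eq}(y)$.

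For the reverse inclusion $R_{eq}\subseteq Q_2(X,G)$, I go through the quotient $X/Q_2$ in three steps. First, I verify that $Q_2$ is a closed, $G$-invariant equivalence relation. Closedness, reflexivity, symmetry, and $G$-invariance follow directly from the definition; transitivity is the key use of the abelian hypothesis via Theorem \ref{nregionally}: if $(x,y),(y,z)\in Q_2$ then $(x,y,z)\in Q_3$, and projecting on the first and third coordinates gives $(x,z)\in Q_2$. Second, I show that $(X/Q_2,G)$ is equicontinuous. For this I invoke the criterion—valid for any minimal continuous action on a compact metric space—that equicontinuity is equivalent to the triviality of the regional proximal relation: the forward direction is exactly the easy inclusion above, and the backward direction is a compactness argument exploiting the existence of inverses in $G$ (if equicontinuity fails, one extracts limits and uses $g_n^{-1}$ as a corrector to produce a non-diagonal element of $Q_2$). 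Applied to $X/Q_2$, this reduces the task to showing $Q_2(X/Q_2,G)=\Delta$. Given $(\bar p,\bar q)\in Q_2(X/Q_2,G)$ with approximating nets $\bar p_n\to\bar p$, $\bar q_n\to\bar q$, $g_n\in G$, and $g_n\bar p_n,g_n\bar q_n\to\bar s$, I lift arbitrarily to preimages $p_n,q_n\in X$ and extract by compactness convergent subnets $p_n\to p$, $q_n\to q$, $g_np_n\to u$, $g_nq_n\to v$; continuity of the quotient map $\pi_{Q_2}$ forces $\pi_{Q_2}(u)=\pi_{Q_2}(v)$, so $(u,v)\in Q_2(X,G)$. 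A diagonal argument transferring $Q_2$-membership from $(u,v)$ back to $(p,q)$—combining the $G$-invariance and closedness of $Q_2$ in $X\times X$ with the abelian structure of $G$—completes the step, yielding $\bar p=\pi_{Q_2}(p)=\pi_{Q_2}(q)=\bar q$. Third, $(X/Q_2,G)$ is now an equicontinuous factor of $(X,G)$, so the universal property of $(X_{eq},G)$ gives a factor map $\rho:X_{eq}\to X/Q_2$ with $\pi_{Q_2}=\rho\circ\pi_{eq}$; hence $\pi_{eq}(x)=\pi_{eq}(y)\Rightarrow\pi_{Q_2}(x)=\pi_{Q_2}(y)$, that is, $(x,y)\in Q_2$.

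The principal obstacle is establishing the equicontinuity of $X/Q_2$, and within that the transfer of $Q_2$-membership along the lifting step. This is the technical heart of the Ellis–Keynes theorem, and it is where the abelian hypothesis (entering through Theorem \ref{nregionally}), the closedness of $Q_2$, and its $G$-invariance all play decisive roles.
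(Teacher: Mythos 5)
The paper does not actually prove this statement; it quotes it from the literature (Auslander's book, p.~130), so there is no internal proof to compare against and your proposal must stand on its own. Your architecture is the standard one: the inclusion $Q_2(X,G)\subseteq\{(x,y):\pi_{eq}(x)=\pi_{eq}(y)\}$ via a $G$-invariant compatible metric on $X_{eq}$ is correct; so is the verification that $Q_2(X,G)$ is a closed, $G$-invariant equivalence relation, with transitivity supplied by Theorem \ref{nregionally}; and the reduction of the reverse inclusion to the equicontinuity of $(X/Q_2,G)$ via the universal property of $X_{eq}$ is sound.

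The genuine gap is in the step you yourself identify as the technical heart: showing $Q_2(X/Q_2,G)=\Delta^{(2)}(X/Q_2)$. After lifting nets and extracting limits you reach $(u,v)\in Q_2(X,G)$ with $u=\lim g_np_n$, $v=\lim g_nq_n$, $p_n\to p$, $q_n\to q$, and you assert that a ``diagonal argument'' combining closedness, $G$-invariance, and abelianness transfers $Q_2$-membership from $(u,v)$ back to $(p,q)$. This does not go through. Closedness of $Q_2$ only pushes membership \emph{forward} along convergent nets; it cannot pull it back through the $g_n$. The diagonal argument would succeed if $(u,v)$ were \emph{proximal}: some $h$ would make $hu$ and $hv$ close, hence $hg_np_n$ and $hg_nq_n$ eventually close while $p_n,q_n$ approximate $p,q$, witnessing $(p,q)\in Q_2(X,G)$. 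But $(u,v)\in Q_2(X,G)$ only provides perturbed points $u',v'$ near $u,v$ with $hu',hv'$ close, and there is no reason these perturbations are realizable as $\lim g_np_n'$, $\lim g_nq_n'$ with $p_n',q_n'$ still near $p,q$ --- the homeomorphisms $g_n^{-1}$ may expand arbitrarily. What is actually needed here is the lifting property $Q_2(X',G)\subseteq\pi^{(2)}(Q_2(X,G))$ for factor maps of minimal actions, which is exactly Proposition \ref{regproxim} of the paper, imported from Ellis's book and proved there with enveloping-semigroup machinery (minimal idempotents); it is not recoverable from compactness and invariance alone. To close the gap, either invoke that lifting result directly --- then any $(\bar p,\bar q)\in Q_2(X/Q_2,G)$ lifts to a pair in $Q_2(X,G)$, which $\pi_{Q_2}$ collapses by construction, giving $\bar p=\bar q$ at once --- or supply the Ellis-semigroup argument explicitly.
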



We will now define several notions of rank. Coincidence rank was introduced in \cite{barge2006geometric}. Several papers have worked with bounds of the cardinality of the fibers of the maximal equicontinuous factor; we call them maximal and minimal rank following \cite{Aujogue2015}.
\begin{definition}
    Let $(X,G)$ be a continuous action and $y\in X_{eq}$. We define the \textbf{maximal rank ($r_M$), minimal rank ($r_m$)} and \textbf{coincidence rank ($r_c$)} as follows:
     \[  
     r_M(X,G)=\sup\{|\pi_{eq}^{-1}(y)|:y\in X_{eq} \},
     \] 
      \[
      r_m(X,G)=\inf\{|\pi_{eq}^{-1}(y)|:y\in X_{eq} \},
     \] 
    \[
       r_c(X,G)=\sup\{n\in\N :\exists x_1,\ldots,x_n\in \pi_{eq}^{-1}(y), (x_i,x_j)\notin P(X,G), \forall  i\neq j \}.
       \]
    
\end{definition}

If a continuous action is minimal, the coincidence rank does not depend on the choice of  $y$ (\cite[Lemma 2.10]{barge2013proximality}). This implies that $r_c(X,G)\leq r_m(X,G)$ as the coincidence
may be measured in a fiber of minimal size.
It is clear that $$r_c(X,G)\leq r_m(X,G) \leq r_M(X,G).$$

We have that $r_M(X,G)=1$ if and only if $(X,G)$ is equicontinuous ($\pi_{eq}$ is bijective). When $(X,G)$ is minimal and $r_m(X,G)=1$, we say that $(X,G)$ is \textbf{almost automorphic}. When $r_M(X,G)$ is finite, the maximal equicontinuous factor is finite to one. For minimal continuous actions, $r_c(X,G)=1$ if and only if $Q_2(X,G)=P(X,G)$ \cite{barge2013proximality}.

\section{Examples}
\label{sec:examples}
All the examples mentioned in this section are minimal. 

 A topological dynamical system is \textbf{uniquely ergodic} if there is only one invariant Borel probability measure. 

Subshifts with minimal rank 1 and arbitrary finite maximal rank were first constructed by Williams \cite{williams1984toeplitz}. These types of examples have been refined in several papers. Iwanik and Lacroix constructed uniquely ergodic examples \cite{iwanik1994some}; J{\"a}ger, Lenz, and Oertel constructed uniquely ergodic actions of $\R^d$ in the form of model sets \cite{jager2019model} (see also \cite{fuhrmann2021irregular}); and Bernales, Cortez, and Gomez constructed actions of residually finite groups \cite{bernales2024invariant} (see also \cite{gomez2025sequence}). All systems arising from substitutions of finite length have finite maximal rank. In particular, Shao defined primitive constant length substitutions (hence uniquely ergodic) that attain any possible maximal rank \cite{shao2005dynamical}.

Systems with bounded sequence entropy or bounded cardinality of IT tuples always have bounded minimal rank \cite{liu2025independence}. The classical Thue--Morse substitution system has minimal rank $2$ and maximal rank $4$. The proof follows from the fact that the Thue--Morse substitution system is an equicontinuous extension of a Toeplitz subshift (e.g. see \cite[Example 11.3]{hauser2024mean}. Generalized Morse (binary) sequences introduced by Keane \cite{keane1968generalized,lemanczyk1988toeplitz} are also two-to-one extensions of Toeplitz systems and hence of odometers. 

Keane also suggested that generalizations to other alphabets should exhibit higher-order symmetries \cite[Page 353]{keane1968generalized}. It is a folklore result that multivariate Thue--Morse sequences behave analogously to the classical one. For example, the system induced by the substitution $(0\to 012),(1 \to 120),(2 \to 201)$ admits a natural factor to a Toeplitz subshift, which is itself an extension of a ternary odometer. This factor is defined by summing three consecutive positions $\mod 3$ (rather than two). It is straightforward to verify that the factor is 3-to-1 and that it is an equicontinuous extension (as in \cite[Example 11.3]{hauser2024mean}). From this, we conclude that the system has minimal rank~$3$. For other examples with any possible finite minimal rank see \cite[Subsection 3.2]{ferenczi2016substitutions}.



In order to build other examples with finite coincidence rank we will use skew-product extensions. The next result follows from \cite[Theorem 3]{glasner1979construction} and by noting that those extensions are not finite-to-one at any point.

\begin{theorem}
    [Glasner and Weiss \cite{glasner1979construction}]
    \label{thm:GW}
    Let $(Y,S)$ be a minimal and uniquely ergodic topological dynamical system. There exists $(X,T)$, proximal extension of $(Y,S)$ through $\pi\colon X\to y$, such that $|\pi^{-1}(y)|=\infty$ for every $y\in Y$.
\end{theorem}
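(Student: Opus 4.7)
The plan is to realize $(X,T)$ as a skew product over $(Y,S)$ with fiber a Cantor set, following the approach of Glasner and Weiss. Let $Z$ be a Cantor set, set $X = Y \times Z$, and define $T(y,z) = (Sy, \phi_y(z))$ where $\phi \colon Y \to \mathrm{Homeo}(Z)$ is a continuous cocycle to be constructed. The projection $\pi(y,z) = y$ is automatically a factor map with $\pi^{-1}(y) = \{y\} \times Z$, so the infinite-fiber condition is built into the ansatz. What remains is to construct $\phi$ so that (i) $(X,T)$ is minimal and (ii) for every $y \in Y$ and every $z_1, z_2 \in Z$ the pair $((y,z_1),(y,z_2))$ is proximal, i.e., $d_Z(\phi_y^{(n)}(z_1),\phi_y^{(n)}(z_2)) \to 0$ along some subsequence, where $\phi_y^{(n)} = \phi_{S^{n-1}y} \circ \cdots \circ \phi_y$.

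The key step is an inductive construction of $\phi$ adapted to a nested sequence of Rokhlin-type towers $(B_k, h_k)$ of $(Y,S)$ with $h_k \uparrow \infty$ and the unique invariant measure of $B_k$ tending to zero (available from the unique ergodicity and minimality of $(Y,S)$). Using a nested sequence of clopen partitions of $Z$, I would arrange at stage $k$ that the $h_k$-step cocycle $\phi_y^{(h_k)}$ collapses atoms of diameter $2^{-k+1}$ into atoms of diameter $< 2^{-k}$ whenever $y \in B_k$, while on the intermediate levels of the tower the cocycle permutes finer atoms so that the induced $Z$-orbit above any generic $y$ becomes dense. Proximality is then immediate: given $y,z_1,z_2$ and $\varepsilon>0$, pick $k$ with $2^{-k}<\varepsilon$; by minimality of $(Y,S)$ some $S^n y$ lies in $B_k$, and then $T^{n+h_k}(y,z_1)$ and $T^{n+h_k}(y,z_2)$ sit in a common atom of diameter less than $\varepsilon$. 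Minimality of $(X,T)$ follows from minimality of the base together with the tower-designed density in the fibers.

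The main obstacle is the book-keeping of the inductive construction, which must simultaneously provide (a) proximal contraction at every scale, (b) no identification of distinct points of $Z$ at any finite stage, so that each fiber remains infinite and each $\phi_y$ stays in $\mathrm{Homeo}(Z)$, and (c) enough permutation of $Z$-coordinates across base points to force minimality, all while keeping $y \mapsto \phi_y$ continuous under successive tower refinements. This delicate compatibility across stages is precisely the technical core of the Glasner--Weiss construction; the additional observation needed beyond their Theorem~3 is exactly point (b), namely that the construction never collapses any fiber at a finite time, so $|\pi^{-1}(y)| = |Z| = \infty$ for every $y \in Y$.
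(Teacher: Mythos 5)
The paper does not actually prove this statement: it is quoted from Glasner--Weiss, and the authors' only added content is the remark preceding the theorem that the extensions produced by \cite[Theorem 3]{glasner1979construction} are nowhere finite-to-one --- which, in the skew-product setting, is immediate because every fibre $\pi^{-1}(y)=\{y\}\times Z$ is a copy of the fibre space. Your point (b) is exactly that observation, so this part of your proposal matches the paper. The difficulty is that the remainder of your outline, which is meant to reconstruct the Glasner--Weiss construction itself, has a gap at the decisive step: the proof of proximality.

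You arrange that over the base $B_k$ of the $k$-th tower the cocycle ``collapses atoms of diameter $2^{-k+1}$ into atoms of diameter $<2^{-k}$,'' and then conclude that an arbitrary pair $z_1,z_2\in Z$ becomes $\varepsilon$-close after a visit to $B_k$. But a contraction of this kind only merges points that already lie in a common atom of the coarser partition at the moment the tower is entered; two points separated at the coarsest scale are never brought together by a scheme that merely refines atoms. Nor can you repair this by asking the $h_k$-step map to contract all of $Z$ into a set of diameter $2^{-k}$: each $\phi_y$ is a homeomorphism of $Z$ onto $Z$, so every composition $\phi_y^{(n)}$ is surjective and its image has full diameter. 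The standard mechanism is instead to contract the complement of one small clopen set $A_k$ into a single small atom while expanding $A_k$ to cover the rest; proximality of a \emph{fixed} pair then requires a separate argument that this pair avoids the exceptional set $A_k$ at some return time to $B_k$, for infinitely many $k$, and this must be made compatible with minimality of the skew product. That argument is precisely the technical core you defer to the reference --- but since the statement to be proved \emph{is} that reference's theorem, the deferral leaves the essential content unproved, and the proximality step as you have written it fails for pairs not sharing an atom.
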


For similar smooth examples, see \cite{HAUPT_JAGER_2024}.

\begin{lemma}
\label{lem:ineq}
Let $(X,G)$ be a minimal proximal extension of $(Y,G)$. We have that $r_c(Y,G)\leq r_c(X,G)\leq r_m(Y,G)$.
\end{lemma}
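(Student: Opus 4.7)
The plan is to exploit two structural facts: (i) a proximal extension induces an identification of maximal equicontinuous factors, so the fibers of $\pi_{eq}$ on $X$ and on $Y$ are linked cleanly through $\pi$; and (ii) proximality is preserved by factor maps in both directions useful for us (collapsing of points under $\pi$ forces proximality in $X$, and the image of a proximal pair is proximal).

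First I would set up the notation: let $\pi\colon X\to Y$ denote the proximal factor map, and $\pi_{X}\colon X\to X_{eq}$, $\pi_{Y}\colon Y\to Y_{eq}$ the maximal equicontinuous factor maps. I would then argue that $X_{eq}$ and $Y_{eq}$ are canonically conjugate via $\pi$. For the nontrivial direction: given $x_1,x_2\in X$ with $\pi(x_1)=\pi(x_2)$, proximality of $\pi$ gives $(x_1,x_2)\in P(X,G)\subset Q_2(X,G)$, so by \Cref{thm:RP=EQ} one has $\pi_X(x_1)=\pi_X(x_2)$. Hence $\pi_X$ descends through $\pi$ to a factor map $h\colon Y\to X_{eq}$ with $\pi_X=h\circ \pi$; combined with the universal property of $Y_{eq}$ and the corresponding map $X_{eq}\to Y_{eq}$ coming from $\pi_Y\circ\pi$ being equicontinuous, minimality forces these to be mutual inverses, so I identify $X_{eq}=Y_{eq}=:Z$ with $\pi_X=\pi_Y\circ\pi$. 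Note $(Y,G)$ is also minimal (factor of a minimal system), so coincidence rank on each side is independent of the chosen fiber.

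For the upper bound $r_c(X,G)\leq r_m(Y,G)$, I would fix $z\in Z$ realizing the infimum of the fiber sizes of $\pi_Y$, and take any $x_1,\dots,x_k\in \pi_X^{-1}(z)$ pairwise non-proximal in $X$. Then $\pi(x_i)\in \pi_Y^{-1}(z)$ and, crucially, the $\pi(x_i)$ must be pairwise distinct: if $\pi(x_i)=\pi(x_j)$ for some $i\neq j$, then proximality of $\pi$ would force $(x_i,x_j)\in P(X,G)$, contradicting the choice. Hence $k\leq |\pi_Y^{-1}(z)|=r_m(Y,G)$, and taking the supremum over $k$ gives the claim.

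For the lower bound $r_c(Y,G)\leq r_c(X,G)$, I would pick $y_1,\dots,y_k\in \pi_Y^{-1}(z)$ pairwise non-proximal in $Y$, and lift each $y_i$ to any $x_i\in \pi^{-1}(y_i)\subset \pi_X^{-1}(z)$. Since $\pi$ is uniformly continuous (being continuous on a compact metric space) and $G$-equivariant, any proximal pair $(x_i,x_j)\in P(X,G)$ would push forward to a proximal pair $(y_i,y_j)\in P(Y,G)$, contradicting the choice of the $y_i$'s. Hence the $x_i$ are pairwise non-proximal, so $r_c(X,G)\geq k$, and letting $k\to r_c(Y,G)$ finishes the proof. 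The only genuinely nontrivial step is the identification $X_{eq}\cong Y_{eq}$; the two inequalities are then short applications of the proximal-collapsing and the uniform continuity of $\pi$ respectively.
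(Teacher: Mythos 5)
Your proposal is correct and follows essentially the same route as the paper: identify $X_{eq}$ with $Y_{eq}$ using that the proximal relation $R_\pi$ sits inside $Q_2(X,G)$, then get the upper bound from the fact that $\pi$ can only collapse proximal points (your distinctness-of-images phrasing is just the contrapositive of the paper's pigeonhole on $k+1$ points), and the lower bound by lifting non-proximal tuples and pushing proximality forward through the continuous equivariant map $\pi$. No substantive difference to report.
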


\begin{proof}
First we will show that $(X_{eq},G)$ and $(Y_{eq},G)$ are conjugate. Let $\phi\colon X\to Y$ be a proximal factor map, $\pi_{eq}\colon X\to X_{eq}$ the maximal equicontinuous factor map of $(X,G)$ and $\pi'_{eq}\colon Y\to Y_{eq}$ the maximal equicontinuous factor map of $(Y,G)$. Since $(Y,G)$ is a factor of $(X,G)$, then $(Y_{eq},G)$ is a factor of $(X_{eq},G)$. 
Now note that there is a correspondence between $G$-invariant closed equivalence relations and factor maps (e.g. see \cite[Proposition 2.2]{garcia2025local}). Let $R_{\phi}\subset X^2$ be the closed $G$-invariant equivalence relation induced by $\phi$. Since $\phi$ is proximal, then $$R_{\phi}\subset P(X,G)\subset Q_2(X,G).$$ We have that $(X_{eq},G)$ is a factor of $(Y,G)$ (the factor map is given by the closed $G$-invariant equivalence relation generated by $Q_2(X,G)$ on $Y=X/R_{\phi}$). Thus $(X_{eq},G)$ is a factor of $(Y_{eq},G)$. Since the maximal equicontinuous factor is unique, we have that $(X_{eq},G)$ and $(Y_{eq},G)$ are conjugate. 

First we prove that $r_c(Y,G)\leq r_c(X,G)$. Let $k=r_c(X,G)$, $y_0\in Y_{eq}$ and $y_1,\ldots,y_{k+1}\in {\pi'^{-1}_{eq}}(y_0)$. Since $\phi$ is surjective, for all $y_i$ there exists $x_i\in X$ such that $\phi(x_i)=y_i \mbox{ for all } i \in \{1,\ldots,k+1\}$. Up to homeomorphism, we have $\pi'_{eq}\circ \phi=\pi_{eq}$. This implies that $\pi_{eq}(x_i)=y_0$, for all $i\in\{1,\ldots,k+1\}$. Since $r_c(X,G)=k$, there exists $x_i\neq x_j$ such that $(x_i,x_j)\in P(X,G)$. Using the continuity of $\phi$, we conclude that $(\phi(x_i),\phi(x_j))\in P(Y,G)$. This implies that $r_c(Y,G)\leq r_c(X,G)$.

Now we prove the other inequality. Let $x\in X_{eq}$ and $k=r_m(Y,G)$. Then there exists $y_0\in Y_{eq}$ such that $k=|\pi_{eq}'^{-1}(y_0)|$. Let $x_1,\dots,x_{k+1}\in {\pi}_{eq}^{-1}(x).$ Since $\pi'_{eq}\circ \phi={\pi}_{eq}$ (up to homeomorphism), there exist $i,j\in \{1,\dots,k+1\}$, with $i\neq j$, such that $\phi(x_i)=\phi(x_j)$. Since $\phi$ is proximal, we have that $(x_i,x_j)\in P(X,G)$. We conclude that $r_c(X,G)\leq r_m(Y,G)$.
\end{proof}

The examples mentioned in the first paragraphs of this section appear to have minimal rank equal to coincidence rank \cite[Proposition 4.9]{Aujogue2015}. In fact, the existence of minimal actions where the coincidence rank and minimal rank differ was explicitly posed as a question in \cite[Page 38]{Aujogue2015}. Using those examples together with the following result, one can construct many such cases.
\begin{corollary}
\label{cor:ex}
    Let $(Y,S)$ be a minimal uniquely ergodic topological dynamical system. There exists a minimal topological dynamical system $(X,T)$ with $$r_c(Y,S)\leq r_c(X,T)\leq r_m(Y,S) \mbox{, and }r_m(X,T)=\infty.$$
\end{corollary}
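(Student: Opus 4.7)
The plan is to directly combine Theorem \ref{thm:GW} with Lemma \ref{lem:ineq}, and then extract the infinite minimal rank bound from the structure of the extension. First, I would apply Theorem \ref{thm:GW} to the given minimal uniquely ergodic system $(Y,S)$ to obtain a (minimal) topological dynamical system $(X,T)$ together with a proximal factor map $\pi \colon X \to Y$ such that $|\pi^{-1}(y)| = \infty$ for every $y \in Y$. Since $(X,T)$ is a minimal proximal extension of $(Y,S)$, Lemma \ref{lem:ineq} immediately yields
\[
 r_c(Y,S) \le r_c(X,T) \le r_m(Y,S),
\]
which takes care of the two inner inequalities.

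The remaining task is to show $r_m(X,T) = \infty$. For this I would reuse the key observation from the proof of Lemma \ref{lem:ineq}: because $\pi$ is proximal, the maximal equicontinuous factors satisfy $(X_{eq},T) \cong (Y_{eq},S)$, and moreover, under this conjugacy, the factor maps commute as $\pi'_{eq}\circ \pi = \pi_{eq}$ (up to the identification of $X_{eq}$ with $Y_{eq}$). Fix any $z \in X_{eq}$ and let $y \in Y_{eq}$ be its image under this identification. Then
\[
 \pi_{eq}^{-1}(z) = \pi^{-1}\bigl(\pi'^{-1}_{eq}(y)\bigr).
\]
Since $\pi'^{-1}_{eq}(y)$ is non-empty and every $\pi$-fiber is infinite by construction, the right-hand side is infinite. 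As this holds for every $z \in X_{eq}$, we conclude $r_m(X,T) = \infty$.

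The main conceptual point, rather than a technical obstacle, is the verification that the equicontinuous factors agree under a proximal extension, which is exactly what the first half of the proof of Lemma \ref{lem:ineq} established; once that identification is in hand, both the upper bound for $r_c(X,T)$ and the infinitude of $r_m(X,T)$ follow mechanically from the fiber-counting $\pi_{eq}^{-1}(z) = \pi^{-1}(\pi'^{-1}_{eq}(y))$. The only subtlety worth spelling out explicitly is that the Glasner--Weiss construction produces a \emph{minimal} extension, so that Lemma \ref{lem:ineq} applies as stated; this is standard for the skew-product constructions in \cite{glasner1979construction}.
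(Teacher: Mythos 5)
Your proposal is correct and follows essentially the same route as the paper: invoke Theorem \ref{thm:GW} to get the infinite-to-one proximal extension, then apply Lemma \ref{lem:ineq} for the two inequalities. The only difference is that you spell out the fiber identification $\pi_{eq}^{-1}(z)=\pi^{-1}\bigl(\pi'^{-1}_{eq}(y)\bigr)$ behind $r_m(X,T)=\infty$, a step the paper leaves implicit (and, incidentally, your concluding inequalities match the statement more faithfully than the paper's final equality $r_c(X,T)=r_m(Y,S)$, which overstates what Lemma \ref{lem:ineq} provides).
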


\begin{proof}
 By Theorem \ref{thm:GW}, there exists $(X,T)$, a proximal extension of $(Y,S)$ through $\pi\colon X\to Y$, such that $|\pi^{-1}(y)|=\infty$ for every $y\in Y$. This implies that $r_m(X,T)=\infty$. Using Lemma \ref{lem:ineq} we conclude that $r_c(X,T)=r_m(Y,S)$.
\end{proof}

\section{Finite maximal rank}

\subsection{$m$-Equicontinuity}





\begin{definition}
 Let $(X,G)$ be a continuous action, $x\in X$ and $m\in \Ntwo$. We say that $x$ is an \textbf{m-equicontinuity point} if for every $\varepsilon>0$ there exists $\delta>0$ such that for any $x_1,\dots,x_{m}\in B_{\delta}(x)$ we have that for every $g\in G$ there exist $i,j\in\{1,\ldots,{m}\}$, with $i\neq j,$ such that $d(gx_i,gx_j)< \varepsilon$. We denote the set of $m$-equicontinuity points by $E^m(X,G)$.

We say that $(X,G)$ is $m$\textbf{-equicontinuous} if $E^m(X,G)=X.$ 
The action $(X,G)$ is \textbf{almost $m$-equicontinuous} if $E^m(X,G)$ is a residual subset (countable intersection of open dense subsets) of $X$.
\end{definition}
Note that $(X,G)$ is equicontinuous if and only if it is $2$-equicontinuous. This $2$ represents the fact that equicontinuity is defined on pairs. 
\begin{proposition}
\label{openE}
Let $(X,G)$ be a continuous action and $m\in \Ntwo$.
We have that $gE^m(X,G) = E^m(X,G)$ for all $g\in G$.
\end{proposition}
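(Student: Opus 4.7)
The plan is to show the two containments separately; by symmetry (replacing $g$ with $g^{-1}$) it suffices to prove $gE^m(X,G) \subseteq E^m(X,G)$ for every $g \in G$. So I would fix $h \in G$, pick $x \in E^m(X,G)$, and show that $hx$ is again an $m$-equicontinuity point.

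The only tool I need beyond the definition is that for each fixed $h \in G$, the map $y \mapsto h^{-1} y$ is a homeomorphism of the compact metric space $X$, hence uniformly continuous. So, given $\varepsilon > 0$, I would first invoke the $m$-equicontinuity of $x$ to produce $\delta_0 > 0$ so that any $z_1,\dots,z_m \in B_{\delta_0}(x)$ satisfy the required coincidence condition for every $g' \in G$. Then I would use uniform continuity of $h^{-1}$ to select $\delta > 0$ with $d(a,b) < \delta \implies d(h^{-1}a, h^{-1}b) < \delta_0$; in particular $h^{-1}(B_\delta(hx)) \subseteq B_{\delta_0}(x)$.

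Now let $y_1,\dots,y_m \in B_\delta(hx)$, so that $z_i := h^{-1} y_i$ lie in $B_{\delta_0}(x)$. For an arbitrary $\tilde g \in G$, apply the $m$-equicontinuity condition at $x$ to the element $g' := \tilde g h \in G$ and the points $z_1,\dots,z_m$: there exist $i\neq j$ with $d(g' z_i, g' z_j) < \varepsilon$. Using associativity of the action, $g' z_i = (\tilde g h)(h^{-1} y_i) = \tilde g y_i$, and similarly for $j$, giving $d(\tilde g y_i, \tilde g y_j) < \varepsilon$. This shows $hx \in E^m(X,G)$, hence $h E^m(X,G) \subseteq E^m(X,G)$, and applying the same argument to $h^{-1}$ yields equality.

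The only potential subtlety, and what I would check carefully, is that the argument does not secretly require $G$ to be abelian: the substitution $g' = \tilde g h$ together with $g' h^{-1} = \tilde g$ works for any group, so no commutativity is needed. The uniform continuity of $h^{-1}$ is guaranteed here because $X$ is compact metric; if one wanted to drop compactness, this is where extra care would be needed.
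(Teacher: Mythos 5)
Your proof is correct and follows essentially the same route as the paper's: both reduce to showing $hE^m(X,G)\subseteq E^m(X,G)$, pull the perturbed points back by $h^{-1}$ using continuity of the action, and then apply the defining condition at the original point with the group element $\tilde g h$, noting that $(\tilde g h)(h^{-1}y_i)=\tilde g y_i$ requires no commutativity. The only cosmetic difference is that you invoke uniform continuity of $y\mapsto h^{-1}y$ where pointwise continuity at $hx$ (as in the paper) already suffices.
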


\begin{proof}
Let $g_0\in G, x\in g_0E^m(X,G)$ and $\varepsilon>0$. Let $y\in E^m(X,G)$, with $x=g_0y$. There exists $0<\delta<\varepsilon$ such that if $x_1,\ldots,x_{m}\in B_\delta(y)$, then for any $g\in G$, there exists $i,j\in \{1,\dots,m\}$, with $i\neq j$, such that $d(gx_i,gx_j)<\varepsilon$. On the other hand, using the continuity of the action, there exists $\eta>0$ such that for all $z\in B_{\eta}(x)$, we have that $d(g_0^{-1}x,g_0^{-1}z)<\delta$.
Let $x_1,\ldots,x_{m}\in B_{\eta}(x)$ and $h\in G$. For every $i\in\{1,\ldots,m\}$, we have that $g_0^{-1} x_i\in B_\delta(y)$.
Hence, for every $h\in G$, there exist $i,j\in \{1,\ldots,m\}$, with $i\neq j$, such that 
\[
d(hg_0g_0^{-1}x_i,hg_0g_0^{-1}x_j)=d(hx_i,hx_j)<\varepsilon.
\]
This implies that $x\in E^m(X,G).$ Thus $g_0E^m(X,G) \subset E^m(X,G)$. Similarly, we have that $g_0^{-1}E^m(X,G) \subset E^m(X,G)$. Hence $gE^m(X,G) = E^m(X,G)$.
\end{proof}

Given a continuous action $(X,G)$, $m\in \Ntwo$ and $\varepsilon>0$, we define 
\begin{align}
\label{eq1}
\begin{split}
E^m_\varepsilon(X,G) =  
\{x\in X: \exists \delta \text{ s.t. } \forall x_1,\ldots,x_{m}\in B_\delta(x), \\
\forall g\in G, \exists 1\leq i\neq j\leq m, \text{ s.t. } d(gx_i,gx_j) < \varepsilon\}.
\end{split}
\end{align}

\begin{lemma}
\label{inverselyinv}
Let $(X,G)$ be a continuous action, $\varepsilon>0$ and $m\in \Ntwo$. We have that
$E^m_\varepsilon(X,G)$ is open,  $gE_{\varepsilon}^m(X,G) = E_{\varepsilon}^m(X,G)$ for all $g\in G$, and
$$E^m(X,G)=\bigcap_{n\in\N} E^m_{1/n}(X,G).$$
\end{lemma}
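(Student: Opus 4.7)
The plan is to dispatch the three assertions separately; each amounts to a routine unraveling of the defining equation \eqref{eq1}, so I do not expect any serious obstacle beyond keeping the role of the fixed $\varepsilon$ straight throughout.

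First, for openness of $E^m_\varepsilon(X,G)$, I would take $x\in E^m_\varepsilon(X,G)$ with a witnessing $\delta>0$ and show that the whole ball $B_{\delta/2}(x)$ sits inside $E^m_\varepsilon(X,G)$. Indeed, for any $y\in B_{\delta/2}(x)$ the triangle inequality gives $B_{\delta/2}(y)\subset B_{\delta}(x)$, so applying the defining property of $x$ to tuples drawn from $B_{\delta/2}(y)$ shows that $y\in E^m_\varepsilon(X,G)$ with new parameter $\delta/2$.

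Second, for $G$-invariance, the argument is the verbatim analog of the one in Proposition~\ref{openE}, only with the fixed $\varepsilon$ substituted for the quantified one. Given $g_0\in G$ and $x=g_0y$ with $y\in E^m_\varepsilon(X,G)$, pick $\delta$ witnessing membership of $y$, then use continuity of the action to choose $\eta>0$ with $g_0^{-1}B_\eta(x)\subset B_\delta(y)$. For any $x_1,\ldots,x_m\in B_\eta(x)$ and any $h\in G$, the points $g_0^{-1}x_i$ lie in $B_\delta(y)$; applying the property of $y$ to the element $hg_0\in G$ produces indices $i\neq j$ with $d(hg_0\cdot g_0^{-1}x_i,hg_0\cdot g_0^{-1}x_j)=d(hx_i,hx_j)<\varepsilon$. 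This yields $g_0E^m_\varepsilon(X,G)\subset E^m_\varepsilon(X,G)$, and the reverse inclusion follows by applying the same reasoning to $g_0^{-1}$.

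Third, the intersection formula is a quantifier swap. By definition, $x\in E^m(X,G)$ precisely when for every $\varepsilon>0$ some $\delta$ works, which is the statement $x\in E^m_\varepsilon(X,G)$ for all $\varepsilon>0$; in particular $x\in\bigcap_{n\in\N}E^m_{1/n}(X,G)$. Conversely, given arbitrary $\varepsilon>0$, pick $n$ with $1/n\leq\varepsilon$; membership in $E^m_{1/n}(X,G)$ provides a $\delta$ fulfilling the condition with $1/n$ in place of $\varepsilon$, and monotonicity of the conclusion in $\varepsilon$ upgrades this to the condition defining $E^m(X,G)$. This completes the three claims.
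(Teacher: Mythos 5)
Your proposal is correct and follows essentially the same route as the paper: the half-radius ball argument for openness, the translation argument borrowed from Proposition~\ref{openE} for $G$-invariance, and the quantifier exchange (choosing $n$ with $1/n\leq\varepsilon$) for the intersection formula. No gaps.
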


\begin{proof}

One can show that $gE_{\varepsilon}^m(X,G) = E_{\varepsilon}^m(X,G)$ for all $g\in G$ with a similar argument as in the proof of Proposition \ref{openE}. 
Now, we will show that $E^m_\varepsilon(X,G)$ is open. Let $x\in E_\varepsilon^m(X,G)$, there exists $\delta>0$, such that $\delta$ satisfies the condition \eqref{eq1} for $x$. Then $B_{\frac{\delta}{2}}(x)\subset E_\varepsilon^m(X,G)$. 
    Indeed if $y\in B_{\frac{\delta}{2}}(x)$, $x_1,\ldots,x_{m}\in B_{\frac{\delta}{2}}(y)$ 
    and $g\in G$ there exists $i,j\in\{1,\ldots,m\}$, with $i\neq j$, such that $ d(gx_i,gx_j)< \varepsilon$. This shows that $E^m_\varepsilon(X,G)$ is open. 
    Finally, we prove that $E^m(X,G)=\bigcap_{n\in \N} E^m_{1/n}(X,G)$. Clearly \[\bigcap_{n\in \N} E^m_{1/n}(X,G) \subset E^m(X,G).\]
    Conversely, let $x\in E^m(X,G)$ and $n\in \N$. There exists $\delta>0$ such that for any $x_1,\ldots,x_{m}\in B_\delta(x)$, and any $g\in G$, there exists $i,j\in\{1,\ldots,m\}$, with $i\neq j$, such that $d(gx_i,gx_j)<\frac{1}{n} $, thus $x\in E^m_{1/n}(X,G).$ 
\end{proof}

The next lemma follows from the previous result. 
\begin{lemma}
    Let $(X,G)$ be continuous action and $m\in\Ntwo$. We have that $E^m(X,G)$ is dense if and only if $(X,G)$ is almost $m$-equicontinuous. 
\end{lemma}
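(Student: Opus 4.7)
The plan is to use the representation from \cref{inverselyinv}, namely
\[
E^m(X,G) \;=\; \bigcap_{n\in\N} E^m_{1/n}(X,G),
\]
together with the fact that each $E^m_{1/n}(X,G)$ is open, to reduce the equivalence to a standard Baire category argument. Note that $X$ is compact metrizable, hence a Baire space.

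For the forward implication, assume $E^m(X,G)$ is dense. Since $E^m(X,G) \subset E^m_{1/n}(X,G)$ for every $n\in\N$, each $E^m_{1/n}(X,G)$ is dense in $X$. By \cref{inverselyinv}, each $E^m_{1/n}(X,G)$ is also open. Therefore $E^m(X,G)$ is a countable intersection of open dense subsets of $X$, i.e., a residual subset, which means $(X,G)$ is almost $m$-equicontinuous.

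For the reverse implication, assume $(X,G)$ is almost $m$-equicontinuous, so that $E^m(X,G)$ is residual. Since $X$ is a compact metric space, the Baire category theorem applies and any residual subset is dense in $X$. Hence $E^m(X,G)$ is dense.

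There is no genuine obstacle here: the whole content sits in \cref{inverselyinv}, which already supplies the openness of the sets $E^m_{\varepsilon}(X,G)$ and the decomposition of $E^m(X,G)$ as their countable intersection. The only point requiring a moment of care is justifying Baire's theorem, which is immediate from compactness and metrizability of $X$.
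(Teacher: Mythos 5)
Your argument is correct and is precisely the one the paper intends: the lemma is stated as an immediate consequence of Lemma \ref{inverselyinv}, and your use of the openness of the sets $E^m_{1/n}(X,G)$, the decomposition $E^m(X,G)=\bigcap_{n}E^m_{1/n}(X,G)$, and the Baire category theorem on the compact metric space $X$ fills in exactly the omitted details. No gaps.
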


\begin{proposition}
\label{almostequicontinuo}
    Let $(X,G)$ be a minimal continuous action and $m\in \Ntwo$. If $(X,G)$ is almost $m$-equicontinuous then $(X,G)$ is $m$-equicontinuous. 
\end{proposition}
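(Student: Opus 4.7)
The plan is to argue exactly as in the classical case (where $m=2$ gives equicontinuity): under minimality, an open $G$-invariant subset is either empty or all of $X$, so the open sets $E^m_\varepsilon(X,G)$ cannot be merely dense — they must be everything. Then intersecting over $\varepsilon = 1/n$ gives the claim via Lemma~\ref{inverselyinv}.

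More precisely, I would first invoke the preceding lemma (or equivalently the hypothesis directly) to conclude that $E^m(X,G)$ is dense in $X$. Since $E^m(X,G) \subset E^m_{1/n}(X,G)$ for every $n \in \N$, this makes each $E^m_{1/n}(X,G)$ a non-empty subset of $X$. By Lemma~\ref{inverselyinv}, each $E^m_{1/n}(X,G)$ is also open and satisfies $g E^m_{1/n}(X,G) = E^m_{1/n}(X,G)$ for every $g \in G$, so the complement $X \setminus E^m_{1/n}(X,G)$ is a closed $G$-invariant proper subset of $X$.

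Now I would apply minimality: the only closed $G$-invariant subsets of $X$ are $\emptyset$ and $X$, so $X \setminus E^m_{1/n}(X,G) = \emptyset$, i.e.\ $E^m_{1/n}(X,G) = X$ for every $n$. Using the identity
\[
E^m(X,G) = \bigcap_{n \in \N} E^m_{1/n}(X,G)
\]
from Lemma~\ref{inverselyinv}, I conclude $E^m(X,G) = X$, so $(X,G)$ is $m$-equicontinuous.

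There is no real obstacle here; the whole argument reduces to the two ingredients already in hand: openness and $G$-invariance of the level sets $E^m_\varepsilon(X,G)$, together with the standard minimality dichotomy for invariant closed sets. The only thing one has to be careful about is not confusing $G$-invariance of $E^m$ (preservation as a set under each $g$) with the stronger statement that every point of $X$ lies in $E^m$; the passage from the former to the latter is exactly what minimality provides, via the open sets $E^m_{1/n}$.
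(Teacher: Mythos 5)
Your proof is correct and follows exactly the same route as the paper: non-emptiness of each $E^m_{1/n}(X,G)$ from the hypothesis, openness and $G$-invariance from Lemma~\ref{inverselyinv}, minimality to force $E^m_{1/n}(X,G)=X$, and the intersection identity to conclude. No issues.
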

\begin{proof}
 By Lemma \ref{inverselyinv}, we have that $E^m(X,G)=\bigcap_{n\in \N} E^m_{1/n}(X,G)$ and that $E^m_{1/n}(X,G)$ is a $G$-invariant open non-empty (by hypothesis) subset for every $n\in \N$. Thus $X\setminus E^m_{1/n}(X,G)$ is a closed $G$-invariant subset. Since $(X,G)$ is minimal, then $X\setminus E^m_{1/n}=\emptyset$. We conclude that $$E^m(X,G)=\bigcap_{m\in\N} E^m_{1/n}(X,G)=X.$$
\end{proof}

For $m\in \N$, we define 
\[
 \Delta^{({m})}(X)=\{(x_1,\dots,x_m)\in X^m:x_i=x_j \mbox{ for some } i\neq j\in \{1,\dots,m\}\}.
\]
\begin{theorem}
    \label{requicontinuous1}
    Let $(X,G)$ be a minimal continuous action and $m\in \Ntwo$. Then $(X,G)$ is $m$-equicontinuous if and only if $Q_{m}(X,G)\setminus \Delta^{({m})}(X)=\emptyset$.
\end{theorem}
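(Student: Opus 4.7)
The plan is to prove both implications via compactness / subsequence arguments, using the group inverse to move between tuples clustered near a single point (where $m$-equicontinuity applies) and tuples whose coordinates are bounded apart (which witness non-triviality in $Q_m$). In both directions, the key observation is that a witness $g$ in the definition of $m$-regional proximality can be inverted and used as a witness in the definition of $m$-equicontinuity.

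For the forward direction, I would assume $(X,G)$ is $m$-equicontinuous and suppose for contradiction that $(x_1,\dots,x_m)\in Q_m(X,G)$ has pairwise distinct coordinates. Set $\eta=\tfrac{1}{3}\min_{i\ne j}d(x_i,x_j)>0$. Unpacking the definition of $Q_m$, for each $n$ I obtain $x_i^{(n)}\in X$ with $d(x_i,x_i^{(n)})<1/n$ and $g_n\in G$ with $d(g_n x_i^{(n)},g_n x_j^{(n)})<1/n$ for all $i,j$. By compactness of $X$, after passing to a subsequence all the $g_n x_i^{(n)}$ converge to a common point $z\in X$. Invoke $m$-equicontinuity at $z$ with $\varepsilon=\eta$ to obtain $\delta>0$; for $n$ large enough, $g_n x_i^{(n)}\in B_\delta(z)$ for every $i$. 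Applying the definition with these points and the group element $g_n^{-1}$ produces indices $i\ne j$ with $d(x_i^{(n)},x_j^{(n)})<\eta$, but by the triangle inequality $d(x_i^{(n)},x_j^{(n)})\ge 3\eta-2/n$, which exceeds $\eta$ for large $n$, a contradiction.

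For the converse, I would assume $Q_m(X,G)\setminus\Delta^{(m)}(X)=\emptyset$ and show $(X,G)$ is $m$-equicontinuous. If not, some point $x_0\notin E^m(X,G)$ exhibits an $\varepsilon_0>0$ so that for every $n$ there are $y_1^{(n)},\dots,y_m^{(n)}\in B_{1/n}(x_0)$ and $g_n\in G$ with $d(g_n y_i^{(n)},g_n y_j^{(n)})\ge\varepsilon_0$ whenever $i\ne j$. Compactness gives a subsequence along which $g_n y_i^{(n)}\to z_i$, where the limit tuple satisfies $d(z_i,z_j)\ge\varepsilon_0>0$, so its coordinates are distinct. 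To verify $(z_1,\dots,z_m)\in Q_m(X,G)$, for $\varepsilon>0$ choose $n$ so large that both $d(g_n y_i^{(n)},z_i)<\varepsilon$ for all $i$ and $2/n<\varepsilon$; take $z_i':=g_n y_i^{(n)}$ and $g:=g_n^{-1}$, so that $gz_i'=y_i^{(n)}\in B_{1/n}(x_0)$ and hence $d(gz_i',gz_j')<2/n<\varepsilon$. This produces an element of $Q_m(X,G)\setminus\Delta^{(m)}(X)$, contradicting the hypothesis.

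The only delicate step I anticipate is the bridging argument in the forward direction: $m$-equicontinuity is formulated for tuples concentrated near one point, while the tuple in $Q_m$ is spread out. Transporting by $g_n$ (so that the approximate tuples cluster near the limit $z$) and then pulling back by $g_n^{-1}$ is what makes the two viewpoints compatible, and picking $\eta$ small enough relative to the minimal pairwise gap of the $x_i$ is what makes the final inequality strict. Notice that minimality is not actually used in this argument; it is presumably kept in the hypothesis for uniformity with the subsequent applications in the paper.
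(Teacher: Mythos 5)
Your proof is correct. The converse direction (non-$m$-equicontinuity produces a tuple in $Q_m(X,G)\setminus\Delta^{(m)}(X)$) is essentially identical to the paper's: push the badly separated tuples near $x_0$ forward by $g_n$, extract a limit tuple with pairwise distances at least $\varepsilon_0$, and pull back by $g_n^{-1}$ to witness regional proximality. The forward direction, however, is genuinely different. The paper argues by contraposition and uses minimality: given a nontrivial tuple in $Q_m$, it covers $X$ by finitely many translates $gU$ of an arbitrary open set $U$, takes a Lebesgue number, and transports approximants of the tuple into $U$ itself, thereby showing that \emph{every} open set contains $m$ points that some group element separates by a fixed $\delta$ (i.e., the system is $m$-sensitive). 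Your argument instead contracts the approximating tuples by $g_n$, extracts a single cluster point $z$ by compactness, and applies the $m$-equicontinuity hypothesis at $z$ with $g_n^{-1}$ as the test element; the choice $\eta=\tfrac13\min_{i\ne j}d(x_i,x_j)$ makes the resulting inequality strict for large $n$. Your route is more economical and, as you observe, does not use minimality in either direction, so it proves the equivalence for arbitrary continuous actions. What the paper's argument buys in exchange is the stronger, uniform conclusion (separation inside every open set), which is exactly the $m$-sensitivity statement exploited in the dichotomy results later in the paper.
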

\begin{proof}
 
First assume that $Q_{m}(X,G)\setminus \Delta^{(m)}(X)\neq\emptyset$. Let \[
(x_1,\dots,x_{m})\in Q_{m}\setminus \Delta^{(m)}(X).
\]
For all $i\in \{1,\ldots,m\}$, there exists a closed neighborhood $A_i$ of $x_i$, such that the collection of sets $\{A_1,\dots, A_m\}$ is disjoint. 
Let \[\delta=\min\{d(A_i,A_j):i\neq j \in\{1,\ldots,m\}\}.\] 
Let $U$ be an open subset of $X$. Using the minimality of $(X,G)$ and the compactness of $X$ we conclude there is a finite $F\subset G$ such that $X=\bigcup_{g\in F}gU.$
Let $\delta'>0$ be a Lebesgue number for the open cover $\{gU:g\in F\}$ (every subset of $X$ with diameter less than $\delta'$ is contained in some member of the cover). 
By hypothesis, for any $0 < \eta \leq \delta$ there are $x_i'\in A_i$ and $g_0\in G$ with $$\max_{1\leq i<j\leq m}d(g_0 x'_i,g_0x'_j)<\delta'.$$
Thus there exists $h\in F$ with $\{g_0x'_1,\ldots,g_0x'_{m}\}\subset hU$. 
Let $y_i=h^{-1}g_0x_i'\in U$ for each $i\in\{1,\ldots,m\}.$ 
Note that \[d(g^{-1}_0hy_i,g^{-1}_0hy_j)\geq d(A_i,A_j)\geq \delta\] when $i\neq j$. Thus, $(X,G)$ is not $m$-equicontinuous.

Now suppose that $(X,G)$ is not $m$-equicontinuous.
 There exist $x_0\in X$ and $\varepsilon_0>0$ such that for 
every $n\in \N$, there exist $x^n_1,\ldots,x^n_{m}\in B_{1/n}(x_0)$ and $g_n\in G$ such that $d(g_nx_i^n ,g_nx_j^n)>\varepsilon$, for every $i,j\in \{1,\dots,m\}$, with $i\neq j$. By compactness of $X$, there exists an increasing sequence of natural numbers, $(k_n)_{n\in\N}$ and $y_1,\dots,y_m\in X$ such that 
\[
\lim_{n\to \infty}g_{k_n} x_i^{k_n}= y_i \mbox{ for all } i\in \{1,\ldots,m\}.
\] 

Furthermore, one can check that 
$d(y_i,y_j)\geq \varepsilon$ for all $i,j\in\{1,\ldots,m\}$, with $i\neq j$. 
Let $\delta>0$. There exists $N\in\N$ such that $1/N\leq \delta$ and 
\[
d(y_i,g_{k_N}x_i^{k_N})<\delta\mbox{ for all }i\in \{1,\ldots,m\}.
\]
For every $i\in \{1,\ldots,m\}$, we define $y'_i:=g_{k_N}x_i^{k_N} $. 
We have that 
\[
d(g_{k_N}^{-1}y'_i,g_{k_N}^{-1}y'_j)=d(g_{k_N}^{-1}g_{k_N}x_i^{k_N},g_{k_N}^{-1}g_{k_N}x_j^{k_N})=d(x^{k_N}_i,x^{k_N}_j)<\delta
\]
 for all $i,j\in\{1,\ldots,m\}, i\neq j$. Furthermore, $(y_1,\ldots,y_m)\in Q_{m}(X,G)\setminus \Delta^{(m)}(X)$.
 
\end{proof}



\begin{theorem}
   Let $G$ be an abelian group, $(X,G)$ a minimal continuous action and $m\in \Ntwo$. The action $(X,G)$ is $m$-equicontinuous if and only if $r_M(X,G)\leq m-1$.
 \end{theorem}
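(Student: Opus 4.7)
The plan is to leverage Theorem \ref{requicontinuous1}, which already characterizes $m$-equicontinuity in terms of the non-diagonal part of $Q_m(X,G)$, and then translate the condition $Q_m(X,G)\setminus \Delta^{(m)}(X)=\emptyset$ into a statement about fibers of $\pi_{eq}$ using Ellis--Keynes (Theorem \ref{thm:RP=EQ}) and Auslander (Theorem \ref{nregionally}). Concretely, I would reduce the claim to showing that $Q_m(X,G)\setminus \Delta^{(m)}(X)=\emptyset$ is equivalent to every fiber of $\pi_{eq}$ containing at most $m-1$ points.

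For the forward direction, I would assume $r_M(X,G)\geq m$ and pick $m$ distinct points $x_1,\ldots,x_m$ in a common fiber of $\pi_{eq}$. By Theorem \ref{thm:RP=EQ}, each pair $(x_i,x_{i+1})$ lies in $Q_2(X,G)$, so Theorem \ref{nregionally} (which requires $G$ abelian — this is exactly where the hypothesis is used) gives $(x_1,\ldots,x_m)\in Q_m(X,G)$. Since the $x_i$ are distinct, this tuple lies in $Q_m(X,G)\setminus \Delta^{(m)}(X)$, so Theorem \ref{requicontinuous1} yields that $(X,G)$ is not $m$-equicontinuous.

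For the converse, suppose $(X,G)$ is not $m$-equicontinuous. By Theorem \ref{requicontinuous1} there is $(x_1,\ldots,x_m)\in Q_m(X,G)\setminus \Delta^{(m)}(X)$. I would observe, directly from the definition of $Q_m(X,G)$, that any subpair $(x_i,x_j)$ belongs to $Q_2(X,G)$: an $\varepsilon$-approximation $x'_1,\ldots,x'_m$ with a common $g$ sending everything to an $\varepsilon$-ball in particular handles the pair $(x_i,x_j)$. Theorem \ref{thm:RP=EQ} then forces $\pi_{eq}(x_i)=\pi_{eq}(x_j)$ for all $i,j$, so the fiber $\pi_{eq}^{-1}(\pi_{eq}(x_1))$ contains the $m$ distinct points $x_1,\ldots,x_m$, giving $r_M(X,G)\geq m$.

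There is no serious obstacle here: Theorem \ref{requicontinuous1} does the analytic work, Ellis--Keynes identifies $Q_2$-classes with $\pi_{eq}$-fibers, and Auslander's result is precisely what is needed to build $m$-tuples out of pairs in the abelian setting. The only point requiring a small verification is the elementary observation that $Q_m(X,G)$ projects into $Q_2(X,G)$ on every pair of coordinates, which is immediate from the definitions.
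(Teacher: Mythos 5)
Your argument is correct and follows essentially the same route as the paper: both directions reduce to Theorem \ref{requicontinuous1}, use Theorem \ref{thm:RP=EQ} to identify $Q_2$-classes with fibers of $\pi_{eq}$, and invoke Theorem \ref{nregionally} to pass from pairs to $m$-tuples. The only cosmetic difference is that you phrase both implications contrapositively, while the paper argues the first one by contradiction.
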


\begin{proof}
     First assume that $r_M(X,G)\leq m-1$.
     By Theorem \ref{requicontinuous1}, it is sufficient to prove that $Q_{m}(X,G)\setminus \Delta^{(m)}(X)=\emptyset$. Suppose that there exists $(x_1,\ldots,x_{m})\in Q_{m}(X,G)\setminus \Delta^{(m)}(X)$. This implies that $x_i\neq x_j$ and $\pi_{eq}(x_i)=\pi_{eq}(x_j)$ for all $i,j\in \{1,\ldots,m\}$, $i\neq j$ (use Theorem \ref{thm:RP=EQ}). In the other hand, by hypothesis, there exist $i,j\in\{1,\ldots,m\}, i\neq j$ such that  $x_i=x_j$, which is a contradiction. Hence, $(X,G)$ is $m$-equicontinuous.
    
     
     Now suppose that $(X,G)$ is $m$-equicontinuous. Let $y\in X_{eq}$. We will prove that $|\pi_{eq}^{-1}(y)|\leq m-1$. Assume that $|\pi_{eq}^{-1}(y)|\geq m$. Then, there exist $x_1,\ldots,x_{m}\in X$ such that $x_i\neq x_j$ and  $\pi_{eq}(x_i)=\pi_{eq}(x_j)$ for all $i,j\in \{1,\ldots,m\}$. This implies that $(x_i,x_j)\in Q_2(X,G)$, for all $i,j\in \{1,\ldots,m\}$. Use Theorem \ref{nregionally} to conclude that $(x_1,\ldots,x_{m})\in Q_{m}(X,G)\setminus \Delta^{(m)}(X)$. Thus $(X,G)$ is not $m$-equicontinuous.
\end{proof}

\subsection{$m$-Sensitivity}
\begin{definition}
Let $(X,G)$ be a continuous action and $m\in\Ntwo$. We say $(X,G)$ is \textbf{$m$-sensitive} if there exists $\varepsilon>0$ such that for any non-empty open set $U$, there are distinct points $x_1,\ldots,x_{m}\in U$ and $g\in G$ with 
$$\min\{d(gx_i,gx_j):i\neq j\in\{1,\ldots,m\}\}\geq \varepsilon.$$ 
Such an $\varepsilon>0$ is called an $m$-sensitivity constant of $(X,G)$.
\end{definition}

We say that an action $(X,G)$ is \textbf{transitive} if for any two non-empty open sets $U$ and $V$ there exists $g\in G$ such that $U\cap gV\neq \emptyset.$

\begin{theorem}
\label{dicotomia0}
Let $m\in \Ntwo$. A transitive continuous action is either $m$-sensitive or almost $m$-equicontinuous.
\end{theorem}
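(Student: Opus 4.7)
The plan is a standard dichotomy argument built around the sets $E^m_\varepsilon(X,G)$ supplied by Lemma~\ref{inverselyinv}. I would split into two cases according to whether every $E^m_\varepsilon(X,G)$ is non-empty or some one of them is empty. The key observation used throughout is that, under transitivity, any non-empty open $G$-invariant subset $V\subset X$ is automatically dense: given any non-empty open $U$, transitivity supplies $g\in G$ with $U\cap gV\neq\emptyset$, and $G$-invariance yields $gV=V$, so $U\cap V\neq\emptyset$.

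In the first case, assume there exists $\varepsilon_0>0$ with $E^m_{\varepsilon_0}(X,G)=\emptyset$. I would show directly that $(X,G)$ is $m$-sensitive with constant $\varepsilon_0$. Given any non-empty open $U$, choose $x\in U$ and $\delta>0$ with $B_\delta(x)\subset U$. Since $x\notin E^m_{\varepsilon_0}(X,G)$, negating the defining condition in \eqref{eq1} produces points $x_1,\ldots,x_m\in B_\delta(x)\subset U$ and an element $g\in G$ such that $d(gx_i,gx_j)\geq\varepsilon_0$ for all $i\neq j$. The distinctness requirement in the definition of $m$-sensitivity is automatic: strict positivity of these distances together with the fact that $g$ acts as a homeomorphism forces $x_i\neq x_j$.

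In the second case, assume $E^m_\varepsilon(X,G)\neq\emptyset$ for every $\varepsilon>0$. By Lemma~\ref{inverselyinv}, each $E^m_\varepsilon(X,G)$ is open and $G$-invariant, hence dense by the observation above. Using the identity $E^m(X,G)=\bigcap_{n\in\N}E^m_{1/n}(X,G)$ from the same lemma, the Baire category theorem (applicable because $X$ is compact metrizable) gives that $E^m(X,G)$ is a dense $G_\delta$. Thus $(X,G)$ is almost $m$-equicontinuous.

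I do not anticipate a substantive obstacle here, as the heavy lifting (openness, $G$-invariance, and the countable-intersection description of $E^m(X,G)$) is already carried out in Lemma~\ref{inverselyinv}. The only small point requiring care is checking that the distinctness condition in the definition of $m$-sensitivity is not lost when one merely negates the $m$-equicontinuity condition, but this is immediate from the strict positivity of $\varepsilon_0$. The argument is a clean adaptation of the classical Akin--Auslander--Berg dichotomy for $m=2$ to the multivariate setting considered here.
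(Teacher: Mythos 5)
Your case split is essentially the paper's own argument, just organized directly rather than by contraposition: the paper assumes the action is not almost $m$-equicontinuous, finds $N$ with $E^m_{1/N}(X,G)$ not dense, and uses exactly your observation (openness and $G$-invariance of $E^m_{1/N}(X,G)$ from Lemma~\ref{inverselyinv}, plus transitivity) to force $E^m_{1/N}(X,G)=\emptyset$ and hence $m$-sensitivity with constant $1/N$; your Case 2 is the same Baire-category step. Your treatment of the distinctness clause in the definition of $m$-sensitivity is correct, and in fact more explicit than the paper's.

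The one piece you omit is the second half of the dichotomy: the paper also proves that an almost $m$-equicontinuous transitive action is \emph{not} $m$-sensitive, i.e.\ that the two alternatives are mutually exclusive, whereas your case split only yields that at least one of them holds. This exclusivity is not vacuous from the definitions and is used later (it is the direction of Corollary~\ref{requicontinuous} asserting that an $m$-equicontinuous minimal action is not $m$-sensitive). It is, however, a short addition: given any candidate constant $\varepsilon>0$, choose $N$ with $1/N<\varepsilon$ and a point $x\in E^m_{1/N}(X,G)$ (non-empty since $E^m(X,G)$ is residual) together with its witnessing $\delta$; the open set $U=B_\delta(x)$ then defeats $\varepsilon$, since every $m$-tuple in $U$ and every $g\in G$ admits a pair at distance less than $1/N<\varepsilon$. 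You should append this to complete the proof of the dichotomy as stated.
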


\begin{proof}
Assume that $(X,G)$ is not almost $m$-equicontinuous, thus $E^m(X,G)$ is not residual. Since $E^m(X,G)=\bigcap_{n\in \N} E^m_{1/n}(X,G)$ and $ E^m_{1/n}(X,G)$ is open for every $n\in \N$ (Lemma \ref{inverselyinv}), there exists $N\in \N$ such that $E^m_{1/N}(X,G)$ is not dense. Assume that $E^m_{1/N}(X,G)$ is non-empty. Then $U=X\setminus \overline{E^m_{1/N}(X,G)}$ is open and non-empty. By Lemma \ref{inverselyinv}, $gE^m_{1/N}(X,G)= E^m_{1/N}(X,G)$ for every $g\in G$. By transitivity, there exist $g \in G$ such that, 
$$\emptyset\neq U\cap gE^m_{1/N}(X,G)= U\cap E^m_{1/N}(X,G)=\emptyset,$$ 
a contradiction. Thus $E^m_{1/N}(X,G)=\emptyset$. This implies that for all $\delta>0$ and for every $x\in X$, there exists $x_1,\ldots,x_{m}\in B_\delta(x)$ and $g\in G$ such that for all $i,j\in \{1,\ldots,m\}$, with $i\neq j$, we have that $d(gx_i,gx_j)\geq 1/N$. Thus the system is $m$-sensitive with $m$-sensitivity constant $1/N$.

Now suppose that $(X,G)$ is almost $m$-equicontinuous. By Lemma \ref{inverselyinv}, $E^m(X,G)=\bigcap_{n\in \N} E^m_{\frac{1}{n}}(X,G)$. Given $\varepsilon>0$, there exists $N\in \N$ such that $1/N<\varepsilon$. Let $x\in E^m_{\frac{1}{N}}(X,G)$ this implies that there exists $\delta>0$ such that for every $x_1,\ldots,x_{m}\in B_\delta(x)$ and $g\in G$, we have that
\[\min\{d(gx_i,gx_j):i\neq j\in\{1,\ldots,m\}\}< \varepsilon.\] Thus $(X,G)$ is not $m$-sensitive.

\end{proof}

Using Proposition \ref{almostequicontinuo} and Theorem \ref{dicotomia0} we obtain the following.
\begin{corollary}
\label{requicontinuous}
Let $m\in \Ntwo$. A minimal continuous action is $m$-equicontinuous if and only if it is not $m$-sensitive. 
\end{corollary}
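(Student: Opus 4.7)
The plan is to combine the two results cited, noting that minimality implies transitivity.

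For the forward direction, I would observe that $m$-equicontinuity straightforwardly rules out $m$-sensitivity: if $(X,G)$ is $m$-equicontinuous and $\varepsilon>0$ is any candidate $m$-sensitivity constant, then pick any point $x\in X$ and apply the $m$-equicontinuity definition at $x$ with this $\varepsilon$ to produce $\delta>0$; the open set $U=B_\delta(x)$ then witnesses the failure of $m$-sensitivity, because for any choice of $x_1,\dots,x_m\in U$ and any $g\in G$ there are indices $i\neq j$ with $d(gx_i,gx_j)<\varepsilon$.

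For the converse, suppose $(X,G)$ is minimal and not $m$-sensitive. Minimality implies transitivity (every orbit is dense, so for non-empty open $U,V$ and any $x\in V$, density of $Gx$ gives some $g$ with $gx\in U$, hence $U\cap gV\neq\emptyset$). Applying the dichotomy in Theorem \ref{dicotomia0} to the transitive action $(X,G)$, the failure of $m$-sensitivity forces $(X,G)$ to be almost $m$-equicontinuous. Then Proposition \ref{almostequicontinuo} upgrades almost $m$-equicontinuity to $m$-equicontinuity under the minimality hypothesis, completing the proof.

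Neither direction poses a real obstacle: the forward direction is a direct unpacking of the two definitions, and the backward direction is a two-line appeal to the previously established dichotomy plus the minimal-implies-full promotion. The only thing to be slightly careful about is spelling out why minimal actions are transitive, since Theorem \ref{dicotomia0} is stated for transitive rather than minimal systems.
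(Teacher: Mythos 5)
Your proof is correct and follows essentially the same route as the paper, which derives the corollary directly from Theorem \ref{dicotomia0} and Proposition \ref{almostequicontinuo} (the forward direction you spell out by hand is already contained in the exclusive form of the dichotomy, whose proof shows that almost $m$-equicontinuity precludes $m$-sensitivity). Your added remark that minimality implies transitivity is the right point to make explicit.
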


The previous result is the multivariate version of the Auslander-Yorke dichotomy \cite{auslander1980interval}. For another multivariate Auslander-Yorke type dichotomy see \cite[Corollary 4.6]{breitenbucher2024multivariate}.





We recover the following result from \cite[Theorem 3.6]{Shao2008}. 
\begin{theorem}
Let $(X,G)$ be a minimal continuous action and $m\in\Ntwo$. Then $(X,G)$ is $m$-sensitive if and only if $r_M(X,G)\geq m$.
\end{theorem}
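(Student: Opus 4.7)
The plan is simply to chain together the two structural results already established in this section. The previous theorem identifies $m$-equicontinuity with the bound $r_M(X,G)\leq m-1$ for minimal actions, and Corollary \ref{requicontinuous} gives the Auslander--Yorke-type dichotomy that a minimal (hence transitive) action is $m$-equicontinuous if and only if it fails to be $m$-sensitive. So the statement is a direct contrapositive combination.

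More concretely, I would argue as follows. Assume first that $(X,G)$ is $m$-sensitive. Since $(X,G)$ is minimal it is in particular transitive, so Corollary \ref{requicontinuous} forces $(X,G)$ to fail $m$-equicontinuity. The previous theorem then yields $r_M(X,G)\geq m$. Conversely, assume $r_M(X,G)\geq m$. The previous theorem implies $(X,G)$ is not $m$-equicontinuous, and another application of Corollary \ref{requicontinuous} gives that $(X,G)$ is $m$-sensitive. Together these yield the equivalence.

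There is essentially no obstacle here: both implications follow by contraposition from results that have already been proved, once one observes that minimality implies transitivity, which is what is needed to invoke the dichotomy. The content of the statement is really contained in the dichotomy (Theorem \ref{dicotomia0}) together with the rank characterization of $m$-equicontinuity; no further combinatorial or dynamical argument is required.
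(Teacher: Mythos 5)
Your proof is correct and is exactly the derivation the paper intends: the paper states this result without a separate proof, presenting it as recovered by chaining the rank characterization of $m$-equicontinuity ($r_M(X,G)\leq m-1$) with the dichotomy of Corollary \ref{requicontinuous}, just as you do. The only caveat is that the rank characterization you invoke is proved under the hypothesis that $G$ is abelian (it rests on Theorems \ref{nregionally} and \ref{thm:RP=EQ}), so your argument — like the paper's implicit one — establishes the statement for abelian $G$, a hypothesis the theorem as written omits.
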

 
	

 \subsection{Maximal $m$-equicontinuous factors}

In this subsection we show that each action has a family of, possibly different, maximal $m$-equicontinuous factors, which we will call $m$-MEF. 

Given a function $f\colon X\to Y$ and $n\in\N$ we denote the product function by $f^{(n)}\colon X^n\to Y^{n}$. Using \cite[Proposition 15.6]{ellis2014automorphisms} and Theorem \ref{nregionally} we conclude the following result. 

\begin{proposition}\label{regproxim} 
Let $(X,G)$ be a minimal continuous action, $(X',G)$ a factor through $\pi\colon X\to X'$ and $m\in \Ntwo$. Then $\pi^{(m)}(Q_m(X,G))=Q_m(X',G).$ \end{proposition}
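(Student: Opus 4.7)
I would prove the two inclusions separately, using Ellis's Proposition 15.6 (the $m=2$ case of the desired statement) together with Theorem \ref{nregionally} (Auslander) to bootstrap from pairs to $m$-tuples.

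The forward inclusion $\pi^{(m)}(Q_m(X,G)) \subseteq Q_m(X',G)$ is a routine transport-of-structure argument. Given $(x_1, \ldots, x_m) \in Q_m(X,G)$ and $\varepsilon > 0$, I would use uniform continuity of $\pi$ on the compact space $X$ to choose $\delta > 0$ such that $d(a,b) < \delta$ implies $d_{X'}(\pi(a),\pi(b)) < \varepsilon$. Applying the definition of $Q_m$ at scale $\delta$ yields perturbations $x'_i$ of the $x_i$ and some $g \in G$ with $d(gx'_i, gx'_j) < \delta$. Pushing everything forward by $\pi$ and using $G$-equivariance of $\pi$, the images $\pi(x'_i)$ are $\varepsilon$-close to $\pi(x_i)$ and satisfy $d_{X'}(g\pi(x'_i), g\pi(x'_j)) = d_{X'}(\pi(gx'_i), \pi(gx'_j)) < \varepsilon$, witnessing $(\pi(x_1), \ldots, \pi(x_m)) \in Q_m(X',G)$.

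For the reverse inclusion, given $(y_1, \ldots, y_m) \in Q_m(X',G)$, each consecutive pair $(y_i, y_{i+1})$ in particular lies in $Q_2(X',G)$. I would construct a preimage tuple iteratively: start by picking any $x_1 \in \pi^{-1}(y_1)$, and for $i = 1, \ldots, m-1$, invoke Ellis's Proposition 15.6 to produce $x_{i+1} \in \pi^{-1}(y_{i+1})$ with $(x_i, x_{i+1}) \in Q_2(X,G)$. Once a tuple $(x_1, \ldots, x_m)$ has been built in which all consecutive pairs are $Q_2$-related, Theorem \ref{nregionally} promotes this to $(x_1, \ldots, x_m) \in Q_m(X,G)$, and by construction $\pi^{(m)}(x_1, \ldots, x_m) = (y_1, \ldots, y_m)$.

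The main obstacle is the iterative lifting step, which requires a \emph{pointwise} version of Ellis's proposition: one must lift each $Q_2$-pair in $X'$ to a $Q_2$-pair in $X$ with the first coordinate prescribed, not merely produce some preimage pair. In the minimal abelian setting this is tractable because Theorem \ref{thm:RP=EQ} identifies $Q_2(X,G)$ with the fiber equivalence relation of $\pi_{eq}$, so the required pointwise lifting is equivalent to the statement that $\pi$ sends each $\pi_{eq}$-fiber onto the corresponding $\pi'_{eq}$-fiber under the induced factor map $\phi\colon X_{eq} \to X'_{eq}$ between the maximal equicontinuous factors; this is exactly the content of (or a direct consequence of) Ellis's Proposition 15.6.
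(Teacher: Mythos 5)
Your proposal is correct and follows essentially the same route as the paper, which gives no written proof at all: it simply derives the proposition by combining Ellis's Proposition 15.6 (the $Q_2$-lifting along factor maps of minimal flows) with Theorem \ref{nregionally} to pass from consecutive pairs to $m$-tuples, exactly as you do. Your remark that the lifting must be taken in its pointwise form (first coordinate prescribed) correctly identifies the only delicate point, and your reduction of it via Theorem \ref{thm:RP=EQ} is consistent with how the cited result is used here.
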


Given an equivalence relation $R\subset X^2$, we denote the associated quotient space by $X/R$.  
Let $\varphi\colon X\to Y$ be a surjective function. We denote the equivalence relation, $x\thicksim y \iff \varphi(x)=\varphi(y)$, by $R_{\varphi}.$ For $x\in X$, $[x]_{\varphi}\in X/R$ is the class of the points related to $x$.
\begin{definition}
  Let $m\in \Ntwo$, $(X,G)$, $(X',G)$ be two minimal continuous action and $\pi:X\to X'$. We say that $\pi$ is an $m$\textbf{-MEF} map if for all $A\in X/ \pi_{eq}$ there exists $\{B_i\}_{i=1}^{m-1}$,  $B_i\in X/ \pi$ such that $A=\bigcup_{i=1}^{m-1}B_i.$
 \end{definition}

 \begin{lemma}
Let $m\in \Ntwo$, $G$ be abelian, $(X,G)$,$(X',G)$ continuous actions and $\pi:X\to X'$ a factor. If $\pi$ is an $m$-MEF map, then $(X',G)$ is $m$-equicontinuous.
 \end{lemma}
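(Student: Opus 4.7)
My plan is to prove the contrapositive statement in the form supplied by Theorem \ref{requicontinuous1}: it suffices to verify that $Q_m(X',G)\setminus \Delta^{(m)}(X')=\emptyset$. Note that $(X',G)$ is minimal because it is a factor of the minimal action $(X,G)$, so the hypotheses of Theorem \ref{requicontinuous1} apply to $(X',G)$.

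First I would pick an arbitrary tuple $(x'_1,\dots,x'_m)\in Q_m(X',G)$ and lift it to $X$. By Proposition \ref{regproxim} the map $\pi^{(m)}$ surjects $Q_m(X,G)$ onto $Q_m(X',G)$, so there exists $(x_1,\dots,x_m)\in Q_m(X,G)$ with $\pi(x_i)=x'_i$ for every $i$.

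Next I would locate the $x_i$ in a common fiber of $\pi_{eq}$. Directly from the definition of $m$-regional proximality (restricting attention to pairs of coordinates), every pair $(x_i,x_j)$ lies in $Q_2(X,G)$, and hence Theorem \ref{thm:RP=EQ} forces $\pi_{eq}(x_i)=\pi_{eq}(x_j)$ for all $i,j$. Thus all the lifts $x_1,\dots,x_m$ lie in a single class $A\in X/\pi_{eq}$.

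Finally I would apply the $m$-MEF hypothesis: $A$ decomposes as $A=\bigcup_{i=1}^{m-1}B_i$ with each $B_i$ a fiber of $\pi$. Since we have $m$ points distributed among $m-1$ sets, the pigeonhole principle gives indices $k\neq l$ and some $i$ with $x_k,x_l\in B_i$, whence $x'_k=\pi(x_k)=\pi(x_l)=x'_l$. So $(x'_1,\dots,x'_m)\in\Delta^{(m)}(X')$, finishing the argument. The only step that is not purely formal is the lifting via Proposition \ref{regproxim}, but that is already established; the remainder of the proof is a direct pigeonhole combined with the Ellis--Keynes identification of $Q_2$ with fibers of $\pi_{eq}$, so I do not anticipate a real obstacle.
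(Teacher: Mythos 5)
Your proof is correct and follows essentially the same route as the paper's: lift the tuple via Proposition \ref{regproxim}, place all lifts in one fiber of $\pi_{eq}$ using Theorem \ref{thm:RP=EQ}, and apply the pigeonhole principle to the $m$-MEF decomposition to conclude via Theorem \ref{requicontinuous1}. The minimality needed to invoke those results is indeed available, since it is built into the definition of an $m$-MEF map.
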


 \begin{proof}
     By Theorem \ref{requicontinuous1} it is sufficient to prove that $Q_m(X',G)\subset \Delta^{(m)}(X')$. Let $(x'_1,\ldots,x'_m)\in Q_m(X',G)$. Using Proposition \ref{regproxim} there exists $(x_1,\ldots,x_m)\in Q_m(X,G)$ such that $\pi^{(m)}(x_1,\ldots,x_m)=(x'_1,\ldots,x'_m).$
     Let $A=[x_1]_{R_{\pi_{eq}}}\in X/R_{\pi_{eq}}$. By Theorem \ref{thm:RP=EQ}, we have that $x_1,\dots, x_m\in A$.
    
     Since $\pi$ is an $m$-MEF map, there exist $B_1,\dots,B_{m-1}\in X/\pi$ with $A=\bigcup_{i=1}^{m-1}B_i$. Hence, there exist $i\neq j\in\{1,\ldots,m\}$ and $k\in \{1,\ldots,m-1\}$ such that $x_i,x_j\in B_k$. This implies that $\pi(x_i)=\pi(x_j)$, and hence $x'_i=x'_j$. We conclude that $(x'_1,\ldots,x'_m)\in \Delta^{(m)}(X')$. Furthermore, $(X',G)$ is $m$-equicontinuous.
 \end{proof}

 \begin{proposition}
Let $m\in \Ntwo$, and $(Y,G)$ a factor of $(X,G)$ through $\phi:X\to Y$.
If $(Y,G)$ is $m$-equicontinuous, then there exists an $m$-MEF map, $\pi:X\to X'$, so that $(Y,G)$ is also a factor of $(X',G)$.
 \end{proposition}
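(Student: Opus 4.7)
The plan is to realize $X'$ as the quotient of $X$ by the intersection of the two closed $G$-invariant equivalence relations we already have in hand. Concretely, I would set $R_\pi = R_\phi \cap R_{\pi_{eq}} \subseteq X \times X$. This intersection is reflexive, symmetric, and transitive, closed as an intersection of closed sets, and $G$-invariant under the diagonal action because both $R_\phi$ and $R_{\pi_{eq}}$ are. Invoking the correspondence between closed $G$-invariant equivalence relations and factor maps (as used in the proof of Lemma~\ref{lem:ineq}), I would let $\pi\colon X \to X' := X/R_\pi$ be the induced factor; it is minimal since $(X,G)$ is. The inclusion $R_\pi \subseteq R_\phi$ means $\phi$ descends through $\pi$, giving $(Y,G)$ as a factor of $(X',G)$.

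For the $m$-MEF property, I would fix $A = \pi_{eq}^{-1}(z) \in X/\pi_{eq}$. Because $R_\pi \subseteq R_{\pi_{eq}}$, every $\pi$-class meeting $A$ is entirely contained in $A$, so $A$ is a disjoint union of $\pi$-classes. Moreover, for $x, x' \in A$ one has $\pi(x) = \pi(x')$ if and only if $(x, x') \in R_\phi \cap R_{\pi_{eq}}$, which on $A$ reduces to $\phi(x) = \phi(x')$. Thus the $\pi$-classes inside $A$ are in bijection with the points of $\phi(A)$. By the universal property of the maximal equicontinuous factor applied to $\pi'_{eq} \circ \phi\colon X \to Y_{eq}$, there is a factor map $\sigma\colon X_{eq} \to Y_{eq}$ with $\sigma \circ \pi_{eq} = \pi'_{eq} \circ \phi$, so $\phi(A) \subseteq (\pi'_{eq})^{-1}(\sigma(z))$. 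Since $(Y,G)$ is $m$-equicontinuous, the theorem stating that this is equivalent to $r_M(Y,G) \le m-1$ gives $|\phi(A)| \le m-1$. Listing the distinct $\pi$-classes inside $A$ as $B_1, \ldots, B_k$ with $k \le m-1$ and padding by repetitions then furnishes the required decomposition $A = \bigcup_{i=1}^{m-1} B_i$ with $B_i \in X/\pi$.

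The subtle point, and really the only nonroutine one, is the choice of $R_\pi$ itself: intersecting $R_\phi$ with $R_{\pi_{eq}}$ is precisely what ensures that $\pi_{eq}$-fibers are saturated under $\pi$, as the definition of $m$-MEF demands, while the inclusion into $R_\phi$ preserves the factorization onto $Y$. The remaining verifications (closedness and $G$-invariance of the intersection, minimality of $(X',G)$, the counting bijection, and the cardinality bound on $\phi(A)$ delivered by $m$-equicontinuity of $(Y,G)$) are all formal once this relation is in place.
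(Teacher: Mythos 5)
Your proposal is correct and follows essentially the same route as the paper: both take $X'=X/(R_\phi\cap R_{\pi_{eq}})$ and then verify the $m$-MEF property from the $m$-equicontinuity of $(Y,G)$. The only difference is cosmetic: the paper invokes the characterization $Q_m(Y,G)\subset\Delta^{(m)}(Y)$ and leaves the fiber-counting implicit, while you spell it out via the equivalent bound $r_M(Y,G)\le m-1$ and the bijection between $\pi$-classes in a $\pi_{eq}$-fiber and points of its $\phi$-image (both versions rely on the same standing hypotheses of minimality and $G$ abelian used throughout that subsection).
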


 \begin{proof}
Let $X'=X/(R_{\phi}\cap R_{\pi_{eq}})$ and $\pi:X\to X'$ the continuous projection. There exists an action $(X',G)$ that makes $\pi$ a factor map. Clearly $(Y,G)$ is also a factor of $(X',G)$. By Proposition \ref{requicontinuous1}, we have that $Q_m(Y,G)\subset \Delta^{(m)}(Y)$. This implies that $\pi$ is an $m$-MEF map. 
 \end{proof}

 \section{Finite coincidence rank}







Let $K\subset G$ be a compact subset. We say that $A\subset G$ is \textbf{$K$-convering} if $G=KA$. Note that $A\subset G$ is {syndetic} if and only if there exists a compact subset $K'\subset G$ such that $A$ is $K'$-covering.\\

\begin{definition}
    Let $(X,G)$ be a continuous action and $m\in \Ntwo$. We say $x\in X$ is a \textbf{convering $m$-equicontinuity point} if for every $\varepsilon>0$, there exists $U$, an open neighborhood of $x$, and $K\subset G$ compact such that for every $x_1,\dots,x_m\in U$, we have that
    \[
    \{g\in G: \exists i\neq j \mbox{ s.t. } d(gx_i,gx_j)\leq\varepsilon\}
    \]
    is $K$-convering. 

    We say $(X,G)$ is \textbf{cover $m$-equicontinuous} if every $x\in X$ is a convering $m$-equicontinuity point. 
\end{definition}

Note that every $m$-equicontinuity point is a covering $m$-equicontinuity point. 

The previous definition is related but does not coincide with the definition of syndetic equicontinuity from \cite{huang2018analogues}. That definition is actually related to minimal rank. 


\begin{definition}
Let $(X,G)$ be a continuous action and $m\in\Ntwo$. We say $(X,G)$ is \textbf{compactly $m$-sensitive} if there exists $\varepsilon>0$ such that for each non-empty open set $U\subset X$ and $K\subset G$ compact there exist $x_1,\ldots,x_m \in U$ and $h\in G$ such that \[Kh \subset \{g\in G: d(gx_i, g x_j)>\varepsilon, \forall i\neq j \;\;  i,j\in \{1,\ldots,m\} \}.\]
\end{definition}

When $G=\Z$ the previous definition coincides with the notion of block $m$-sensitivity as defined in \cite{zou2017stronger}. Originally defined for $m=1$ in \cite{ye2018sensitivity}.

\begin{proposition}
    Let $(X,G)$ be a minimal continuous action and $m\in \Ntwo$. Then $(X,G)$ is cover $m$-equicontinuous if and only if it is not compactly $m$-sensitive. 
\end{proposition}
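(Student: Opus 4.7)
The plan is to establish this as the covering analogue of Theorem~\ref{dicotomia0}, treating the two implications separately. Throughout, for $x_1,\dots,x_m\in X$ and $\eta>0$, let me abbreviate $S_\eta(x_1,\dots,x_m):=\{g\in G:\exists\, i\neq j,\ d(gx_i,gx_j)\leq\eta\}$, so that cover $m$-equicontinuity at $x$ says that for every $\varepsilon>0$ there is $U\ni x$ open and $K\subset G$ compact with $S_\varepsilon(x_1,\dots,x_m)$ being $K$-covering for all $x_1,\dots,x_m\in U$.

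For the forward direction, I argue by contradiction. Suppose $(X,G)$ is cover $m$-equicontinuous and also compactly $m$-sensitive with constant $\varepsilon>0$. Pick any $x\in X$ (minimality is not yet needed) and apply cover $m$-equicontinuity at $x$ to this $\varepsilon$, obtaining an open neighborhood $U$ of $x$ and a compact $K\subset G$ such that $S_\varepsilon(x_1,\dots,x_m)$ is $K$-covering for every $x_1,\dots,x_m\in U$. Replace $K$ by $K\cup K^{-1}$; this is still compact and the covering property is preserved (enlarging $K$ can only help), so I may assume $K$ is symmetric. Now compact $m$-sensitivity applied to $U$ and $K$ produces $x_1,\dots,x_m\in U$ and $h\in G$ with $Kh\subset G\setminus S_\varepsilon(x_1,\dots,x_m)$. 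But $G=K\cdot S_\varepsilon(x_1,\dots,x_m)$ lets me write $h=k_0 s_0$ with $k_0\in K$, $s_0\in S_\varepsilon(x_1,\dots,x_m)$, and symmetry of $K$ gives $s_0=k_0^{-1}h\in Kh$, contradicting $Kh\cap S_\varepsilon(x_1,\dots,x_m)=\emptyset$.

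For the converse I prove the contrapositive. Negating cover $m$-equicontinuity yields $x_0\in X$ and $\varepsilon_0>0$ such that for every open $U\ni x_0$ and every compact $K\subset G$ there exist $x_1,\dots,x_m\in U$ with $S_{\varepsilon_0}(x_1,\dots,x_m)$ not $K$-covering; equivalently, there is some $h\in G$ with $K^{-1}h\subset G\setminus S_{\varepsilon_0}(x_1,\dots,x_m)$. I claim $\varepsilon_0$ is a compact $m$-sensitivity constant. Given a non-empty open $V\subset X$ and a compact $K'\subset G$, minimality of $(X,G)$ provides $g_0\in G$ with $g_0x_0\in V$, so $U:=g_0^{-1}V$ is an open neighborhood of $x_0$. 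Apply the negation to $U$ and to the compact set $K:=g_0^{-1}(K')^{-1}$, for which $K^{-1}=K'g_0$, producing $x_1,\dots,x_m\in U$ and $h\in G$ with $K'g_0 h\subset G\setminus S_{\varepsilon_0}(x_1,\dots,x_m)$. Set $y_i:=g_0 x_i\in V$ and $h':=g_0 h g_0^{-1}$; then $K'h'g_0=K'g_0 h$, so for every $k'\in K'$ and $i\neq j$,
\[
d(k'h'y_i,k'h'y_j)=d\bigl((k'g_0 h)x_i,(k'g_0 h)x_j\bigr)>\varepsilon_0,
\]
which gives $K'h'\subset\{g: d(gy_i,gy_j)>\varepsilon_0\ \forall i\neq j\}$, as required.

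The only two points of care are those I have just singled out: in the forward direction, the symmetric-$K$ trick converts the global fact $G=KS$ into the pointwise statement $Kh\cap S\neq\emptyset$; in the converse, the bad point $x_0$ must be transported into $V$ via minimality, and the conjugation $h\mapsto g_0 h g_0^{-1}$ compensates the left/right mismatch so that the covering failure near $x_0$ becomes compact sensitivity near $V$. Everything else is bookkeeping from the definitions and does not involve any additional hypothesis on $G$.
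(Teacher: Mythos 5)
Your proof is correct and follows essentially the same route as the paper's: one direction is the elementary observation that $G=KS$ forces $Kh$ (after symmetrizing $K$) to meet $S$, and the other uses minimality to translate the relevant neighborhood, exactly as in the paper, only organized as a contrapositive. Your explicit handling of the $K$ versus $K^{-1}$ bookkeeping (symmetrizing $K$, and the conjugation $h\mapsto g_0hg_0^{-1}$) is in fact slightly more careful than the paper's, which silently identifies $K$-covering with $K^{-1}$-covering.
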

\begin{proof}
    Assume that $(X,G)$ is not compactly $m$-sensitive. For every $\varepsilon>0$, there exists a non-empty open subset $V\subset X$ and a compact subset $K\subset G$ such that for every $y_1,\ldots,y_m \in V$, we have that 
    \[Kh \nsubseteq \{g\in G: d(gy_i, g y_j)>\varepsilon, \forall i\neq j, \;\;  i,j\in \{1,\ldots,m\} \}\]
    for every $h\in G$. This implies that
     \[
    \{g\in G: \exists i\neq j \mbox{ s.t. } d(gy_i,gy_j)\leq\varepsilon\}
    \]
    is $K$-convering. 
    Let $x\in X$. Since $(X,G)$ is minimal we have that $Gx$ is dense. There exists $U$, an open neighborhood of $x$ and $g_x\in G$, such that $g_xU\subset V$. Since $K$-convering sets are invariant under translation we conclude that 
    for every $x_1,\dots,x_m\in U$, we have that
    \[
    \{g\in G: \exists i\neq j \mbox{ s.t. } d(gx_i,gx_j)\leq\varepsilon\}
    \]
    is $K$-convering.

    Now assume that $(X,G)$ is cover $m$-equicontinuous. For every $\varepsilon>0$, there exists $U$, an open neighborhood of $x$, and a compact subset $K\subset G$ such that for every $x_1,\dots,x_m\in U$, we have that
    \[
    \{g\in G: \exists i\neq j \mbox{ s.t. } d(gx_i,gx_j)\leq\varepsilon\}
    \]
    is $K$-convering. 
    This implies that for every $h\in G$
    \[Kh \nsubseteq \{g\in G: d(gy_i, g y_j)>\varepsilon, \forall i\neq j \;\;  i,j\in \{1,\ldots,m\} \}.\]
We conclude that $(X,G)$ is not compactly $m$-sensitive.    
\end{proof}


Let $(X,G)$ be a continuous action and $U,V$ open non-empty subsets of $X$. We define 
\[
N(U,V)=\{g\in G: U\cap gV \neq \emptyset\}.
\]
The next results follows from the proof of \cite[Proposition 3.7]{maass_2007}. We write the proof for completeness. 
\begin{lemma}
\label{lem:weakmixing}
    Let $(X,G)$ be a minimal continuous action and $m\in\Ntwo$. If \[(x_1,\dots, x_m)\in Q_m(X,G)\] and $U_1\times \cdots \times U_m$ is a product neighborhood of $(x_1,\dots, x_m)$ then
    \[
    \bigcap_{i=1}^{m}N(U_1,U_i)\neq \emptyset.
    \]
\end{lemma}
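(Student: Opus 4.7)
The plan is to exploit the defining property of $Q_m(X,G)$ along a sequence of shrinking tolerances, extract a limit point via compactness of $X$, and then use minimality to move that limit point inside $U_1$.

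Concretely, I would first apply the definition of $m$-regional proximality with $\varepsilon = 1/n$: for every $n \in \N$ there exist $x_1^n,\ldots,x_m^n \in X$ with $d(x_i,x_i^n) < 1/n$ and $g_n \in G$ such that $d(g_n x_i^n, g_n x_j^n) < 1/n$ for all $i,j$. Since $X$ is compact, after passing to a subsequence I may assume $g_n x_1^n \to z_*$ for some $z_* \in X$; the triangle inequality combined with $d(g_n x_i^n, g_n x_1^n) < 1/n$ then forces $g_n x_i^n \to z_*$ for every $i$.

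Next I would invoke minimality of $(X,G)$. The orbit $Gz_*$ is dense in $X$ and therefore meets the non-empty open set $U_1$, yielding $h \in G$ with $hz_* \in U_1$. Continuity of the action gives $hg_n x_i^n \to hz_*$ for every $i$, so for some sufficiently large index $N$ we have $hg_N x_i^N \in U_1$ simultaneously for $i=1,\ldots,m$. Since also $x_i^n \to x_i \in U_i$ and each $U_i$ is open, I may enlarge $N$ so that $x_i^N \in U_i$ as well. Setting $g := hg_N$, the points $gx_i^N$ witness $U_1 \cap gU_i \neq \emptyset$ for every $i$, and hence $g \in \bigcap_{i=1}^m N(U_1,U_i)$.

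The main obstacle is a quantifier subtlety rather than a conceptual one: the translating element $h$ depends on the limit point $z_*$, which itself is produced only after applying the $Q_m$ property with all the tolerances $1/n$, so one cannot fix $h$ before extracting the sequence. Passing to a convergent subsequence first, and only then invoking minimality and continuity, resolves this cleanly; the remaining steps are standard bookkeeping with open neighborhoods.
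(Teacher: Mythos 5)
Your proof is correct and uses essentially the same mechanism as the paper: extract a common limit point $z_*$ of the tuples $g_nx_i^n$ by compactness, then use minimality to move $z_*$ into $U_1$ and pull the witnesses back. The paper packages this into an auxiliary set $L(x_1,\dots,x_m)$ shown to be non-empty, closed and $G$-invariant (hence all of $X$), but that is just the "every orbit is dense" form of minimality in disguise; your version simply unpacks the paper's "one can check" step explicitly.
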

\begin{proof}
We define $L(x_1,\dots, x_m)$ as the set of points $x\in X$ such that for each product neighborhood  $V\times V_1\times \cdots \times V_m$, of $(x,x_1,\dots, x_m)$, there exists $y_1,\dots,y_m\in V$ and $g\in G$ such that $gy_i\in V_j$ for all $j\in\{1,\dots,m\}$. 

One can check that $L(x_1,\dots, x_m)$ is non-empty, closed and $G$-invariant. Since $(X,G)$ is minimal we have that $L(x_1,\dots, x_m)=X$. Thus, $x_1\in L(x_1,\dots, x_m)$. The conclusion follows from this fact. 

\end{proof}

The equivalence between (1) and (3) in Theorem \ref{blockrproximal} can possibly be obtained by adapting the proof of \cite[Theorem 1.1]{zou2017stronger}, which is the multivariate version of \cite[Theorem A]{ye2018sensitivity}. Here, we present an alternative proof that employs different tools, including (2) of Theorem \ref{blockrproximal} and Lemma \ref{lem:weakmixing}. 
 
\begin{theorem}
\label{blockrproximal}
Let $G$ be a $\sigma$-compact, abelian group, $(X,G)$ a minimal continuous action and $m\in\Ntwo$. Then the following conditions are equivalent.
\begin{enumerate}
    \item $(X,G)$ is compactly $m$-sensitive.
    \item There exists $\delta>0$ such that for each open set $U\subset X$, there exist $x_1,\ldots,x_m\in X$ such that $x_1\in U$, $(x_i,x_j)\in Q_2(X,G)$ and 
$$\inf_{g\in G} d(gx_i,gx_j)>\delta, \mbox{ for all } i,j\in \{1,\ldots,m\}.$$
\item $r_c(X,G)\geq m$.
\end{enumerate}
\end{theorem}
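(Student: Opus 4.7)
The plan is to prove the cycle $(3)\Rightarrow (2)\Rightarrow (1)\Rightarrow (3)$, using (2) as a convenient bridge and reserving Lemma \ref{lem:weakmixing} for the step in which a witness of compact $m$-sensitivity has to be produced inside a prescribed open set.

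For $(3)\Rightarrow(2)$: start from $x_1,\ldots,x_m$ in a common fiber of $\pi_{eq}$ that are pairwise non-proximal, and let $\delta>0$ be a lower bound for the positive numbers $\inf_g d(gx_i,gx_j)$. Given an open set $U$, use minimality to pick $g_0\in G$ with $g_0x_1\in U$ and consider the translated tuple $(g_0x_1,\ldots,g_0x_m)$. Equivariance of $\pi_{eq}$ keeps the tuple inside a single fiber (hence pairwise in $Q_2(X,G)$ by Theorem \ref{thm:RP=EQ}), and a change of variable preserves the orbit infima, so (2) follows.

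For $(2)\Rightarrow(1)$: given $U$ open and $K\subset G$ compact, apply (2) to obtain $x_1,\ldots,x_m$ with $x_1\in U$, pairwise in $Q_2$, and $\inf_g d(gx_i,gx_j)>\delta$. Since $G$ is abelian, Theorem \ref{nregionally} upgrades this to $(x_1,\ldots,x_m)\in Q_m(X,G)$. Pick $\eta>0$ below a uniform-continuity modulus that makes $d(y,y')<\eta$ force $d(ky,ky')<\delta/3$ for every $k\in K$; take $V_1\subset U$ a neighborhood of $x_1$ of diameter $<\eta$ and $V_i=B_\eta(x_i)$ for $i\geq 2$. Lemma \ref{lem:weakmixing} then supplies $h\in G$ and perturbations $y_i\in V_i$ with $hy_i\in V_1\subset U$. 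Set $z_i=hy_i$. For $g$ in the translated compact set $Kh^{-1}$ (so that $gh\in K$), a triangle inequality gives $d(gz_i,gz_j)\geq d((gh)x_i,(gh)x_j)-2\delta/3\geq \delta/3$, so $Kh^{-1}$ witnesses compact $m$-sensitivity with constant $\delta/3$.

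For $(1)\Rightarrow(3)$: using $\sigma$-compactness write $G=\bigcup_n K_n$ with $K_n$ increasing and compact, and take a sequence of open sets $U_n$ with $\mathrm{diam}(U_n)\to 0$. For each $n$, compact $m$-sensitivity produces $x_1^n,\ldots,x_m^n\in U_n$ and $h_n\in G$ with $K_nh_n$ contained in the $\varepsilon$-separation set. Pass to a subsequence so that each $y_i^n:=h_nx_i^n$ converges to some $y_i$. For any $g\in G$, eventually $g\in K_n$, hence $d(gy_i^n,gy_j^n)>\varepsilon$ along the subsequence and $\inf_g d(gy_i,gy_j)\geq\varepsilon$, giving $(y_i,y_j)\notin P(X,G)$. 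Simultaneously, $d(h_n^{-1}y_i^n,h_n^{-1}y_j^n)=d(x_i^n,x_j^n)<\mathrm{diam}(U_n)\to 0$, which verifies $(y_i,y_j)\in Q_2(X,G)$ directly from the definition (with $y_i^n,y_j^n$ as the perturbations and $h_n^{-1}$ as the witness element). Theorem \ref{thm:RP=EQ} then places all $y_i$ in a single fiber of $\pi_{eq}$, and pairwise non-proximality yields $r_c(X,G)\geq m$.

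The main obstacle I anticipate is the $(2)\Rightarrow(1)$ step: the neighborhoods handed to Lemma \ref{lem:weakmixing} must be calibrated simultaneously to the prescribed $U$ and to a uniform-continuity modulus depending on $K$, and one has to check that the all-of-$G$ separation for $(x_i)$ survives, with only a controlled loss, when passing to the perturbed tuple $(z_i)=(hy_i)$ and restricting to the shifted compact set $Kh^{-1}$. The other two implications reduce to a translation argument and to a compactness/diagonalization extraction.
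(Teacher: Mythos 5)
Your proof is correct and follows the same cycle $(3)\Rightarrow(2)\Rightarrow(1)\Rightarrow(3)$ as the paper, using condition (2) as the same bridge and invoking Theorem \ref{nregionally} and Lemma \ref{lem:weakmixing} at the same points. The only notable divergences are local simplifications: in $(3)\Rightarrow(2)$ you use a single translation where the paper takes a limit of translates, and in $(1)\Rightarrow(3)$ you verify $(y_i,y_j)\in Q_2(X,G)$ directly from the definition (with $h_n^{-1}$ as the witness), which neatly bypasses the paper's detour through the metric on $X_{eq}$ and its moduli of equicontinuity.
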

\begin{proof}

$(3) \implies (2)$

By hypothesis there exists $(z_1,\dots,z_m)\in Q_m(X,G)$ such that $(z_i,z_j)\notin P(X,G)$ for every $i\neq j\in \{1,\dots,m\}$.
For every $i,j\in \{1,\ldots,m\}$, define $\delta_{ij}=\inf_{g\in G}d(gz_i,gz_j)$. Let $\delta=\frac{1}{2}\min\{\delta_{ij}: i,j\in \{1,\ldots,m\}\} >0$, $U$ a non-empty open subset of $X$ and $x_1\in U$. Since $Gz_1$ is dense, there exists $\{g_n\}_{n\in\N}$ such that $\lim_{n\to\infty}g_{n}z_1=x_1$. Since $X$ is compact assume, without loss of generality, that there exists $x_2,\dots,x_m\in X$ such that $\lim_{n\to\infty}g_{n}z_i=x_{i}, \forall i\in\{2,\ldots,m\}$.

Since $(z_i,z_j)\in Q_2(X,G)$ and $Q_2(X,G)$ is closed and $G$-invariant, then
\[(x_i,x_j)\in Q_2(X,G)\] 
for all $i,j\in \{1,\ldots,m\}$. For other hand, since the action is continuous we conclude that for all $ i,j\in\{1,\ldots,m\}, i\neq j$

\begin{align*}
    \inf_{g\in G} d(gx_i,gx_j)&=\lim_{n\to \infty}d(gg_nz_i,gg_nz_j)\\
    &\geq\inf_{g\in G}d(gz_i,gz_j)>\delta.
\end{align*}

$(2) \implies (1)$  

    Let $\delta$ be a constant obtained from condition $(2)$, $U\subset X$ open, and $K\subset G$ compact. There exist $x_1,\ldots,x_m\in X$, with $x_1\in U$ such that $(x_i,x_j)\in Q_2(X,G)$  and 
\begin{equation}
    \inf_{g\in G}d(gx_i,gx_j)>\delta\mbox{ } \forall \; i,j\in\{1,\ldots,m\}. \tag{A} \label{eq:A}
\end{equation}
   
    Choose $U_1\times \cdots \times U_m$ a sufficiently small open product neighborhood of $(x_1,\dots,x_m)$ so that $U_1\subset U$ and
    \[ 
    \min_{g\in K}d(gU_i,gU_j)>\delta \mbox{ }\forall \; i,j\in\{1,\ldots,m\}.
    \]
    Since $(x_i,x_j)\in Q_2(X,G)$ for all $i,j\in \{1,\ldots,m\},$ using Theorem \ref{nregionally} we obtain that \[(x_1,\ldots,x_m)\in Q_m(X,G).\] From Lemma \ref{lem:weakmixing} we get that
    \[\bigcap_{i=1}^{m}N(U_1,U_i)\neq\emptyset.\] 
    Let $h\in \bigcap_{i=1}^{m}N(U_{1},U_i).$ For every $i\in \{1,\ldots,m\}$, there exists $y_i\in U_{1}$ such that $hy_i\in U_i$. Using \eqref{eq:A} we have that
    \[\
    \min _{g\in K}d(ghy_i,ghy_j)\geq \min_{g\in K}d(g U_i,gU_j)>\delta\] for every $i,j\in \{1,\ldots,m\}$. Hence \[Kh\subset \{g\in G:d(gy_i,gy_j)>\delta\}.\] We conclude that $(X,G)$ is compactly $m$-sensitive.

    (1) $\implies$ (3)
    
    Let  $d_{eq}$ be a compatible metric of $X_{eq}$. For each $k\in\N$, we set $\varepsilon_k>0$ so that $\lim_{k\to \infty}\varepsilon_k= 0$. Since $(X_{eq},G)$ is equicontinuous, for each $k\in\N$, there exist $0<\tau_k$, $\tau'_k<\varepsilon_k$ such that for every $y_1,y_2\in X_{eq}$, with $d_{eq}(y_1,y_2)<\tau_k$, then $d_{eq}(gy_1,gy_2)<\varepsilon_k$ for every $g\in G$; and for every $x_1,x_2\in X$ with $d(x_1,x_2)<\tau_k'$ then $d_{eq}(\pi (x_1),\pi (x_2))<\tau_k$.  

Let $x\in X$ and $U_k=B_{\tau'_k}(x)$. We consider a sequence of compact sets, $\{F_n\}_{n\in \N}$, such that $F_i\subset F_{i+1}$ for every $i\in \N$, and $\cup_{n\in\N}F_n=G$. For all $g\in G$, there exists $n_g\in\N$ such that $g\in F_n$ for all $ n\geq n_g$. 

Since $(X,G)$ is compactly $m$-sensitive, there exists $\delta>0$ so that for each $n,k\in\N$, there exists $h_{n,k}\in G$ and $\{x^k_1,\ldots,x^k_m\}\subset U_k$ such that 

\begin{equation}
 F_nh_{n,k}\subset \{g\in G:d(gx_i^k,gx_j^k)>\delta,\forall i\neq j\}.
 \tag{B} \label{eq:B}
\end{equation}

For every $k\in\N$ and $i\in\{1,\dots,m\}$, there exists $z_i^k\in X$ so that $$\lim_{n\to\infty} h_{n,k}x_i^k= z_i^k.$$ 

Note that for every $g\in G$, if $n\geq n_g$ then $gh_{n,k}\in F_nh_{n,k}$. Thus, for every $g\in G$, $n\geq n_g$, $k\in\N$ we have $d(gh_{n,k}x_i^k, gh_{n,k}x_j^k)>\delta$ (using $\ref{eq:B}$) and hence $d(gz_i^k,gz_j^k)\geq \delta$ for all $i\neq j\in\{1,\ldots,m\}$ .

 
 
For every $i\in \{1,\ldots,m\}$, let $z_i\in X$ be an accumulation point of $\{z_i^k\}_{k\in \N}$.
We have that $d(gz_i,gz_j)\geq \delta$ for all $i,j\in \{1,\ldots,m\}$ with $i\neq j$; thus $(z_i,z_j)$ are not proximal.  
Since $x^{k}_j\in U_k$ for $j\in\{1,\ldots,m\}$, we have $d_{eq}(\pi (x^{k}_i),\pi (x^{k}_j) )<\tau_k$ and $d_{eq}(g\pi (x^{k}_i),g\pi (x^{k}_j))<\varepsilon_k$ for each $g\in G$.
In particular, 
$$d_{eq}(h_{n,k}\pi (x^{k}_i),h_{n,k}\pi (x^{k}_j))<\varepsilon_k,$$
for each $n\in \N$  and $i,j\in\{1,\ldots,m\}$. 
This implies that $d_{eq}(\pi (z^{k}_i),\pi (z^{k}_j))\leq \varepsilon_k$ for all $i,j\in \{1,\ldots,m\}$, with $i\neq j$. Hence $d_{eq}(\pi(z_i),\pi(z_j))<\varepsilon_k$ for all $i,j\in\{1,\ldots,m\}$ and $k\in\N$.
We conclude that for every $i,j\in\{1,\ldots,m\}$, we have $\pi(z_i)=\pi(z_j)$. This implies that $r_c(X,G)\geq m$.
\end{proof}




\begin{corollary}
      Let $m\in\Ntwo$, $G$ be abelian and $\sigma$-compact, and $(X,G)$ a minimal continuous action. Then $(X,G)$ is cover $m$-equicontinuous if and only if $r_c(X,G)<m$.
\end{corollary}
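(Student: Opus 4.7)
The plan is to derive this corollary directly by chaining the two main results of the preceding subsection. Specifically, the earlier Proposition establishes that for minimal continuous actions, cover $m$-equicontinuity is exactly the negation of compact $m$-sensitivity, and Theorem~\ref{blockrproximal} establishes (in the equivalence of (1) and (3)) that compact $m$-sensitivity is equivalent to $r_c(X,G) \geq m$.

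First I would invoke the Proposition on the equivalence between cover $m$-equicontinuity and the failure of compact $m$-sensitivity. This gives: $(X,G)$ is cover $m$-equicontinuous iff $(X,G)$ is not compactly $m$-sensitive. Then I would apply Theorem~\ref{blockrproximal}, whose content includes the equivalence ``$(X,G)$ is compactly $m$-sensitive iff $r_c(X,G)\geq m$''. Negating both sides yields ``$(X,G)$ is not compactly $m$-sensitive iff $r_c(X,G) < m$''. Combining the two equivalences gives precisely the claim $(X,G)$ is cover $m$-equicontinuous iff $r_c(X,G) < m$.

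No further work is required, since $\sigma$-compactness and abelianness of $G$, together with minimality of $(X,G)$, are exactly the hypotheses of Theorem~\ref{blockrproximal} and the preceding Proposition. There is essentially no obstacle: the statement is a one-line formal consequence of the two results cited, and the proof amounts to writing down the two equivalences and negating the second.
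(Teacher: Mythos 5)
Your proposal is correct and is exactly the argument the paper intends: the corollary is stated without proof immediately after Theorem~\ref{blockrproximal}, precisely because it follows by combining the proposition identifying cover $m$-equicontinuity with the negation of compact $m$-sensitivity and the equivalence $(1)\Leftrightarrow(3)$ of that theorem. Nothing further is needed.
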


\section{Questions}

As we noted, the Thue--Morse substitution induces a system with maximal rank 4 and minimal rank 2. 

\begin{question}
    Are there minimal continuous actions $(X,G)$ where $r_m(X,G)=r_M(X,G) <\infty $?
\end{question}

The examples given by Corollary \ref{cor:ex} work for actions of $\Z$. Examples like this should be possible for other groups, but the techniques to build them would be different. 

\begin{question}
For which groups can we find examples with  $r_c(X,G)<r_m(X,G)$?
\end{question}

More generally:
\begin{question}
    Which are the possible combinations of minimal rank, maximal rank, and coincidence rank that a minimal action can achieve?
\end{question}

\begin{question}
    Are there any groups for which Theorem \ref{teorema1.1} does not hold?
\end{question}

For Corollary~\ref{cor:ex}, we constructed examples with $r_c(X, G) < \infty$ by decomposing the maximal equicontinuous factor into two components: a proximal factor and a finite-to-one factor. We wonder whether this is always the case.

\begin{question}
    Let $(X, G)$ be a minimal continuous with $r_c(X, G) < \infty$. Is it always possible to decompose the maximal equicontinuous factor into a proximal factor followed by a finite-to-one factor?
\end{question}

\bibliography{ref}
\bibliographystyle{plain}

\end{document}